\theoremstyle{plain} 
\newtheorem{theorem}             {Theorem} 
\newtheorem{corollary}  [theorem]{Corollary}
\newtheorem{lemma}[theorem]{Lemma}
\newtheorem{conjecture}[theorem]{Conjecture}
\theoremstyle{definition}
\newtheorem{definition}{Definition}
\newtheorem{problem}{Problem}
\theoremstyle{remark}
\DeclareMathOperator{\lcm}{lcm}
\def\modd#1 #2{#1\ \mbox{\rm (mod}\ #2\mbox{\rm )}}
\begin{document}
\begin{singlespace}
\author{Chris Bispels}
\address{Department of Mathematics and Statistics, University of Maryland - Baltimore
County, Baltimore, MD 21250, USA}
\email{cbispel1@umbc.edu}

\author{Muhammet Boran}
\address{Department of Mathematics, Yildiz Technical University, 34220 Esenler, Istanbul, TURKEY}
\email{muhammet.boran@std.yildiz.edu.tr}

\author{Steven J.\ Miller}
\address{Department of Mathematics and Statistics, Williams College, Williamstown, MA 01267, USA}
\email{sjm1@williams.edu}

\author{Eliel Sosis}
\address{Department of Mathematics, University of Michigan, Ann Arbor, MI 48104, USA}
\email{esosis@umich.edu}

\author{Daniel Tsai}
\address{Department of Mathematics, National Taiwan University, No. 1, Sec. 4, Roosevelt Rd., Taipei 10617, Taiwan (R.O.C.)}
\email{tsaidaniel@ntu.edu.tw}

\title{$v$-Palindromes: An Analogy to the Palindromes}

\begin{abstract}
Around the year 2007, one of the authors, Tsai, accidentally discovered a property of the number $198$ he saw on the license plate of a car. Namely, if we take $198$ and its reversal $891$, which have prime factorizations $198 = 2\cdot 3^2\cdot 11$ and $891 = 3^4\cdot 11$ respectively, and sum the numbers appearing in each factorization getting $2+3+2+11 = 18$ and $3+4+11 = 18$, both sums are $18$. Such numbers were later named $v$-palindromes because they can be viewed as an analogy to the usual palindromes. In this article, we introduce the concept of a $v$-palindrome in base $b$ and prove their existence for infinitely many bases. We also exhibit infinite families of $v$-palindromes in bases $p+1$ and $p^2+1$, for each odd prime $p$. Finally, we collect some conjectures and problems involving $v$-palindromes.
\end{abstract}

\subjclass[2010]{Primary 11A63; Secondary 11A25, 11A51}
\thanks{This work was supported in part by the 2022 Polymath Jr REU program.}

\maketitle

\section{Introduction}

If I (D.\ Tsai) recall correctly, it was in the year 2007 when I was 15 years old. My mother and younger brother were in a video rental shop near our home in Taipei and my father and I were waiting outside the shop, standing beside our parked car. I was a bit bored and glanced at the license plate of our car, which was 0198-QB. For no clear reason, I took the number $198$ and did the following. I factorized $198=2\cdot 3^2\cdot 11$, reversed the digits of $198$, and factorized $891=3^4\cdot 11$. Then, I summed the numbers appearing in each factorization: $2+3+2+11=18$ and $3+4+11=18$, respectively. Surprisingly to me, they are equal! We also illustrate this pictorially in Figure \eqref{fig:1}.
\begin{figure}[h]
\begin{align}
198\quad&=\quad 2\cdot 3^2\cdot 11\quad\xmapsto{\phantom{aaaa}}\quad 2+(3+2)+11 \nonumber \\
\uparrow\phantom{!}\quad&\phantom{=}\quad \phantom{2\cdot 3^2\cdot 11}\quad\phantom{\xmapsto{\phantom{aaaa}}}\quad \phantom{2+(3+2}\parallel\nonumber\\
\text{digit reversal}&\phantom{=}\quad \phantom{2\cdot 3^2\cdot 11}\quad\phantom{\xmapsto{\phantom{aaaa}}}\quad \phantom{2+(3+2}18 \nonumber \\
\downarrow\phantom{!}\quad&\phantom{=}\quad \phantom{2\cdot 3^2\cdot 11}\quad\phantom{\xmapsto{\phantom{aaaa}}}\quad \phantom{2+(3+2}\parallel\nonumber\\
891\quad&=\quad \phantom{2\cdot !!} 3^4\cdot 11\quad\xmapsto{\phantom{aaaa}}\quad (3+4)+11 \nonumber
\end{align}
\caption{\label{fig:1} $198$ is a $v$-palindrome in base $10$}
\end{figure}
Afterwards I spent some time trying to prove there are infinitely many such numbers (we define them rigorously in Subsection \ref{sub:defn}), but could not show it.

In October 2018, I published a one-and-a-half page note \cite{tsai18} in the S\={u}gaku Seminar magazine, which is sort of like the American Mathematical Monthly of Japan, defining $v$-palindromes and showing their infinitude. However, I recall knowing how to show their infinitude as early as the summer of 2015.

In March, 2021, I published the paper \cite{tsai}, first calling such numbers $v$-\emph{palindromes}. I proved a general theorem \cite[Theorem 1]{tsai} describing a periodic phenomenon pertaining to $v$-palindromes. Then, in August of 2022, I published \cite{tsai22} (formerly \cite{tsai21pre} on arxiv), in which more in-depth investigations were done. I also wrote the manuscripts \cite{tsai3,thispaper,tsai5} (\cite{thispaper} is a former version of this manuscript) which are, at time of writing, preprints.

In Subsection \ref{sub:defn} we define $v$-palindromes in a general base, briefly discuss prime $v$-palindromes, and explain why the $v$-palindromes can be considered an analogue to the usual palindromes. In Subsection \ref{sec:outline}, we give an outline of the rest of the paper. In Subsection \ref{sec:other}, we review other related work.

\subsection{\texorpdfstring{$v$-palindromes}{v-palindromes}}
\label{sub:defn}

Recall the base $b$ representation of a natural number where $b\geq2$ is the base. For every natural number $n$, there exist unique integers $L\geq1$ and $0\leq a_0,a_1,\ldots,a_{L-1}<b$ with $a_{L-1}\neq0$ such that
\begin{equation}\label{baseb}
  n=a_0+a_1b+\cdots+a_{L-1}b^{L-1}.
\end{equation}
We also denote this as $n=(a_{L-1},\cdots,a_1,a_0)_b$. Thus $L$ is the number of base $b$ digits of $n$. We define the \textit{digit reversal} in base $b$ of $n$ to be
\begin{equation}
r_b(n)=a_{L-1}+a_{L-2}b+\cdots+a_{0}b^{L-1}.
\end{equation}
For instance, $r_{10}(18)=81$, $r_{10}(2)=r_{10}(200)=2$, and $r_2(2) = r_2((1,0)_2) = 1$. Next we define a function $v(n)$ to denote ``summing the numbers appearing in the factorization''.
\begin{definition}
Suppose that the prime factorization of the natural number $n$ is
\begin{equation}\label{eq:fact}
  n=p^{\varepsilon_1}_1\cdots p^{\varepsilon_s}_s q_1\cdots q_t,
\end{equation}
where $s,t\geq0$ and $\varepsilon_1,\ldots,\varepsilon_s\geq2$ are integers and $p_1,\ldots,p_s,q_1,\ldots,q_t$ are distinct primes. Then we set
\begin{equation}
  v(n)=\sum^s_{i=1}(p_i+\varepsilon_i)+\sum^t_{j=1}q_j.
\end{equation}
\end{definition}
Notice that $v(n)$ is an additive function, i.e., $v(mn)=v(m)+v(n)$ whenever $m$ and $n$ are relatively prime natural numbers. The values of $v(n)$ have been created as sequence A338038 in the On-Line Encyclopedia of Integer Sequences \cite{oeis}. We can now make the following definition.
\begin{definition}[$v$-palindrome]
Let $b\geq2$ be an integer. A natural number $n$ is a $v$-\textit{palindrome} in base $b$ if
\begin{itemize}
\item[{\rm (i)}] $b\nmid n$,
\item[{\rm (ii)}] $n\neq r_b(n)$, and
\item[{\rm (iii)}] $v(n)=v(r_b(n))$.
\end{itemize}
The set of $v$-palindromes in base $b$ is denoted by $\mathbb{V}_b$.
\end{definition}
Condition (i) is included merely for the aesthetic look of $n$ and $r_b(n)$ having the same number of digits. Condition (ii) is included since if $n=r_b(n)$, then condition (iii) holds trivially, and so nothing is surprising. The sequence of $v$-palindromes in base ten has been created as sequence A338039 in \cite{oeis}. A generalization of Figure \eqref{fig:1}, with the factorizing step omitted, would be as follows:
\begin{figure}[h]
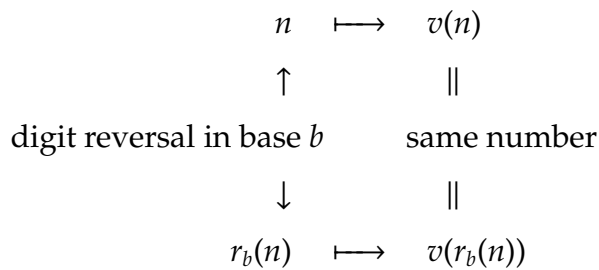

\begin{align}
n\quad&\xmapsto{\phantom{aaaa}}\quad v(n) \nonumber \\
\uparrow\quad&\phantom{\xmapsto{\phantom{aaaa}}}\quad\phantom{aa}\parallel\nonumber\\
\text{digit reversal in base $b$}&\phantom{\xmapsto{\phantom{aaaa}}}\phantom{aa}\text{same number}\nonumber\\
\downarrow\quad&\phantom{\xmapsto{\phantom{aaaa}}}\quad\phantom{aa}\parallel\nonumber\\
r_b(n)\quad&\xmapsto{\phantom{aaaa}}\quad v(r_b(n)) \nonumber
\end{align}
\caption{\label{fig:2} $v$-palindromes in base $b$}
\end{figure}

Only base ten is dealt with in \cite{tsai18,tsai,tsai22,tsai3,tsai5}, but most of the results and proofs therein generalize straightforwardly to a general base.

It is conjectured in \cite[(a)]{tsai18} that there are no prime $v$-palindromes in base ten. Recently, Boran et al.\ \cite{polymath} characterized prime $v$-palindromes in base ten and showed that they are precisely the primes of the form $5\cdot 10^m-1$ such that $5\cdot 10^m-3$ is also prime. Thus, prime $v$-palindromes in base ten, if any exist, must be twin primes.

When we consider an arbitrary base, however, we have been able to find several prime $v$-palindromes. For example, $109$ is a prime $v$-palindrome in base $16$. We have that $v(109)=109$ since $109$ is prime, and $r_{16}(109)=r_{16}((6,13)_{16})=(13,6)_{16}=214$. Then $v(r_{16}(109))=v(214)=107+2=109$, so $109$ is a $v$-palindrome in base $16$. Notice that $109$ happens to be a twin prime, but we can also find examples of prime $v$-palindromes that are not twin primes. For example, $467$ is a prime $v$-palindrome in base $276$, yet $467$ is not a twin prime. It remains an open problem to classify which bases are similar to base $10$ where any prime $v$-palindrome must necessarily be a twin prime.

We now explain how $v$-palindromes can be viewed as an analogy to usual palindromes. Recall the following definition of the usual palindrome.
\begin{definition}[palindrome]
Let $b\geq2$ be an integer. A natural number $n$ is a \emph{palindrome} in base $b$ if $n=r_b(n)$.
\end{definition}
The definition of $v$-palindromes can be obtained from that of the usual palindromes by applying $v$ to the equality $n=r_b(n)$ and then including the conditions (i) and (ii) for the reasons explained earlier. The mere application of $v$ to the equality $n=r_b(n)$ causes palindromes and $v$-palindromes to behave very differently. Although the function $v$ is a specific as function defined above, there is nothing special about it. It is equally conceivable to use any other function $f\colon\mathbb{N}\to\mathbb{C}$ instead in condition (iii), calling the defined kind of numbers $f$-palindromes.

\subsection{Outline and main results of this paper}\label{sec:outline}
In Section \ref{sec:vpal10}, we recall infinitely many more examples of $v$-palindromes in base ten from the previous works \cite{tsai18,tsai,tsai3}. In Section \ref{sec:past}, we state analogous results in a general base $b$ of results in base ten from \cite{tsai,tsai22}, as well as prove that if there exists a $v$-palindrome in a base $b$, then there exists infinitely many (Theorem \ref{thm:inf}). In Section \ref{sec:vpalb}, we exhibit $v$-palindromes in bases $p+1$ and $p^2+1$, for each odd prime $p$. In Section \ref{sec:main}, we prove that $v$-palindromes exist in infinitely many bases $b$ (Corollary \ref{cor:infb}). Finally in Section \ref{sec:further}, we discuss further problems pertaining to $v$-palindromes in a general base.

\subsection{Other related work}\label{sec:other}
In this section, we give references to other works related to $v$-palindromes in various ways. For results on the usual palindromes, we refer the reader to Goins \cite{goins}, Hern\'andez Hern\'andez and Luca \cite{santos}, Pongsriiam \cite{pong2}, Pongsriiam and Subwattanachai \cite{pong1}, and the references therein, though there is much more literature available on palindromes.

Digit reversal has been studied in the past. In Hardy \cite{hardy}, it is mentioned that $4\cdot 2178 = 8712$ and $9\cdot 1089 = 9801$. Therefore the numbers $2178$ and $1089$ have the property that their digit reversal is a multiple of (at least twice) themselves. Following Sutcliffe \cite{sutcliffe}, in general we are solving $k n = r_b(n)$ for integers $k\geq2$, $n\geq1$, and $b\geq2$. If $n$ has one base $b$ digit, then $n = r_b(n)$ and there is no solution. Hence $n$ has at least two base $b$ digits. Then by the generalization of \cite[Lemma 2.3]{polymath} to a general base, $n^2$ has more base $b$ digits than $n$. Now, as $r_b(n)$ has no more base $b$ digits than $n$, we have that $n^2$ has more base $b$ digits than $r_b(n)$. Consequently, $kn = r_b(n)$ implies that $kn=r_b(n) < n^2$, and so $k<n$. In \cite{sutcliffe}, the case of $n$ having less than four base $b$ digits is addressed completely, whereas the case of $n$ having four base $b$ digits is partially addressed. Klosinski and Smolarski \cite{klosinski} also considered this problem, and mentioned that $4\cdot 219\cdots 978 = 879\cdots 912$ for any number of $9$'s in between, generalizing the aforementioned $4\cdot 2178 = 8712$ nicely.

Other functions similar to $v(n)$ have been studied by many authors. Alladi and Erd\"{o}s \cite{alladi} and Lal \cite{lal} studied the function
\begin{equation}
  A(n) = \sum^s_{i=1}p_i \varepsilon_i+\sum^t_{j=1}q_j,
\end{equation}
following the same notation as Equation  \eqref{eq:fact}. In \cite{alladi}, analytical and other aspects of $A(n)$ are studied. In \cite{lal}, iterates of $A(n)$ are investigated. Mullin \cite{mullin} and Gordon and Robertson \cite{gordon} studied the function
\begin{equation}
  \psi(n) = \prod^s_{i=1}p_i \varepsilon_i\cdot \prod^t_{j=1}q_j.
\end{equation}
In \cite{mullin}, research problems on $\psi(n)$ were posed, and \cite{gordon} proved two theorems on $\psi(n)$.

Similar equalities to the condition $v(n) = v(r_b(n))$ from the definition of $v$-palindromes, with another function in place of $v$, have been studied by several authors. Following Spiegelhofer \cite{spieg}, we define \emph{Stern's diatomic sequence} $s(n)$ by $s(0) = 0$, $s(1) = 1$, and $s(2n) = s(n)$ and $s(2n+1) = s(n) + s(n+1)$ for all $n\geq1$. In Dijkstra \cite{dijkstra2}, a problem is given asking to show $s(n) = s(r_2(n))$ for all $n\geq1$. Dijkstra also proved this equality in \cite[pp.\ 230--232]{dijkstra}. Following \cite{spieg} again, we define the function $b(n)$ introduced by Northshield as $b(0) = 0$, $b(1) = 1$, and
\begin{align}
  b(3n) & = b(n),\\
  b(3n+1) & = \sqrt{2}\cdot  b(n) + b(n+1),\\
  b(3n+2) & = b(n) + \sqrt{2} \cdot b(n+1),
\end{align}
for all $n\geq0$. Then it is proved that $b(n) = b(r_3(n))$ for all $n\geq1$ \cite[Theorem 1]{spieg}. \cite[Theorem 2]{spieg} gives a slightly intricate sufficient condition for a complex-valued function $f(n)$ and integer $b\geq2$ to satisfy $f(n) = f(r_b(n))$ for all $n\geq1$, to which \cite[Theorem 1]{spieg} is a corollary. Following Spiegelhofer \cite{spieg2}, we define an analogue of Stern's diatomic sequence, the \emph{Stern polynomials} $s_n(x,y)$, by $s_1(x,y) = 1$ and $s_{2n}(x,y) = s_n(x,y)$ and $s_{2n+1}(x,y) = x s_n(x,y) + ys_{n+1}(x,y)$ for $n\geq1$. It was proved that $s_n(x,y) = s_{r_2(n)}(x,y)$ for all $n\geq1$ \cite[Theorem 1]{spieg2}. We introduce a final equality of this type from Morgenbesser and Spiegelhofer \cite{morgen}. Let $\sigma_b$ be the sum-of-digits function in base $b\geq2$. For $\alpha\in\mathbb{R}$ and integers $n\geq1$, define
\begin{equation}
  \gamma(\alpha,n) = \lim_{x\to\infty}\frac{1}{x}\sum_{k<x} e^{2\pi i \alpha (\sigma_b(k+n)-\sigma_b(k))}.
\end{equation}
Then it is proved that $\gamma(\alpha,n) = \gamma(\alpha,r_b(n))$ for all $n\geq1$ \cite[Theorem 1]{morgen}.

\section{More \texorpdfstring{$v$}{v}-palindromes in base ten}\label{sec:vpal10}

We illustrated in Figure \eqref{fig:1} that $198$ is a $v$-palindrome in base ten. That there are infinitely many $v$-palindromes in base ten is shown in \cite{tsai18} by specifically showing that all numbers
\begin{equation}\label{eq:1998}
18, 198, 1998,\ldots,
\end{equation}
with any number of nines in the middle are $v$-palindromes in base ten. Also, \cite{tsai18} mentions that all numbers of the form
\begin{equation}\label{eq:ten}
18,1818,181818,\ldots,
\end{equation}
with any number of $18$s concatenated, are $v$-palindromes in base ten. In fact, the main theorem of \cite{tsai} is inspired by Equation \eqref{eq:ten}. Both Equations \eqref{eq:1998} and \eqref{eq:ten} seem to be derived from the number $18$. They are subsets of the following more general family.
\begin{theorem}[{\cite[Theorem 3]{tsai3}}]
  If $\rho$ is a palindrome in base ten consisting entirely of the digits $0$ and $1$, then $18\rho$ is a $v$-palindrome in base ten.
\end{theorem}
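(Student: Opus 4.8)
The plan is to read $18\rho$ as the product $18\cdot\rho = 2\cdot 3^2\cdot\rho$, and to reduce everything to one clean identity: that the base-ten reversal of $18\rho$ is exactly $81\rho$. Granting this, conditions (i) and (ii) are immediate, and condition (iii) collapses to a short computation with the additive function $v$. So the real work is to pin down the digits of $18\rho$ and $81\rho$ and then exploit the palindromic symmetry of $\rho$.

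First I would set up notation: write $\rho=(c_{k-1},\dots,c_0)_{10}$ with each $c_i\in\{0,1\}$, leading digit $c_{k-1}=1$, and the palindrome condition $c_i=c_{k-1-i}$ for all $i$ (so in particular $c_0=1$ and $\rho$ is odd). The key observation is that multiplying by $18$ produces no carries: writing $18=(1,8)_{10}$, the ``pre-carry'' coefficient of $10^i$ in the product $18\rho$ is $e_i:=8c_i+c_{i-1}$ (with the convention $c_{-1}=c_k=0$), and since $c_i\in\{0,1\}$ we have $e_i\in\{0,1,8,9\}\subseteq\{0,\dots,9\}$. Hence $e_0,\dots,e_k$ are literally the base-ten digits of $18\rho$, and its top digit $e_k=c_{k-1}=1\neq0$, so $18\rho$ has exactly $k+1$ digits. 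The identical computation for $81=(8,1)_{10}$ gives digits $f_i:=c_i+8c_{i-1}$, again with no carries, so the digits of $81\rho$ are $f_0,\dots,f_k$.

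Now the palindrome hypothesis does the work. The reversal $r_{10}(18\rho)$ has $10^j$-digit equal to $e_{k-j}$, and I would check $e_{k-j}=f_j$ for every $j$: using $c_i=c_{k-1-i}$ one gets $c_{k-j}=c_{j-1}$ and $c_{k-j-1}=c_j$, whence $e_{k-j}=8c_{j-1}+c_j=f_j$ (the boundary indices $j=0,k$ being checked directly from $c_0=c_{k-1}=1$). Therefore $r_{10}(18\rho)=81\rho$, which is the crux. Condition (i) then follows since the last digit $e_0=8\neq0$, and condition (ii) since $18\rho<81\rho$.

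Finally, for (iii) I would be careful about one subtlety: although $\rho$ is coprime to $2$, it need not be coprime to $3$ (e.g.\ $\rho=111$), so I cannot simply split $v$ across $2\cdot 3^2$ and $\rho$. Instead I would write $\rho=3^{a}m$ with $\gcd(m,6)=1$ and $a\geq0$. Then $18\rho=2\cdot 3^{a+2}m$ and $81\rho=3^{a+4}m$, and since $v$ is additive on coprime factors and $v(3^{e})=3+e$ for $e\geq2$, both sides evaluate to the same value: $v(18\rho)=2+(3+a+2)+v(m)=a+7+v(m)=(3+a+4)+v(m)=v(81\rho)$. I expect the reversal identity to be the main obstacle—specifically verifying the no-carry claim and matching the reversed digits through the palindrome condition—while the factor-of-three bookkeeping in (iii) is the subtle point that must not be overlooked.
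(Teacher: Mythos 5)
Your proof is correct and essentially matches the paper's own argument: the paper establishes this statement as the $p=3$ case of Theorem \ref{thm:genrl3}, whose proof likewise shows $r_{10}(18\rho)=81\rho$ by a carry-free digit analysis (displayed there via the block structure of $\rho$ into runs of ones and zeros, rather than your convolution coefficients $e_i=8c_i+c_{i-1}$) and then verifies $v(18\rho)=v(81\rho)$ after writing $\rho=3^{\alpha}n$ with $3\nmid n$ --- exactly your factor-of-three bookkeeping, which you are right to flag as the point that must not be glossed over. Your index-symmetry check $e_{k-j}=f_j$ is a tidier, more explicitly justified substitute for the paper's digit-pattern display, but the decomposition and all key steps coincide.
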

This theorem relates the usual palindromes with the $v$-palindromes. If we take $\rho$ to be a repunit, then we get Equation \eqref{eq:1998}. If we take $\rho$ to have alternating digits of $0$ and $1$, then we get Equation \eqref{eq:ten}. If we take $\rho$ to have only the first and last digits being $1$ and at least one $0$ in between, then we deduce the family of $v$-palindromes in base ten
\begin{equation}
1818,18018,180018,\ldots,
\end{equation}
with any number of $0$'s in between two $18$'s.

Thus the infinitude of $v$-palindromes in base ten is well-established.

\section{Past results for a general base}\label{sec:past}
In this section we state some past results from \cite{tsai,tsai22}, which were just for base ten, for a general base. The base ten proofs generalize straightforwardly to a general base. Finally we show that if there exists a $v$-palindrome in a base $b$, then there exists infinitely many (Theorem \ref{thm:inf}).

\subsection{A periodic phenomenon}\label{sec:aperiodic}

We state the main theorem of \cite{tsai}, which describes a periodic phenomenon involving $v$-palindromes and repeated concatenations in base ten, for a general base. The proof in \cite{tsai} is only for base ten, but is easily adapted for a general base. Before that, we provide notation for repeated concatenations.
\begin{definition}\label{def:rc}
Suppose that $n=(a_{L-1},\ldots,a_1,a_0)_b$ is a base $b$ representation and $k\geq1$ is an integer, then we denote the repeated concatenation of the base $b$ digits of $n$ consisting of $k$ copies of $n$ by $n(k)_b$. That is,
\begin{align}
  n(k)_b & =(\underbrace{a_{L-1},\ldots, a_1,a_0,a_{L-1},\ldots, a_1,a_0,\ldots\ldots, a_{L-1},\ldots, a_1,a_0}_\text{$k$ copies of $a_{L-1},\ldots, a_1,a_0$})_b \nonumber \\
  & =n(1+b^L+\cdots + b^{(k-1)L})=n\cdot\frac{1-b^{Lk}}{1-b^L}. \label{wd}
\end{align}
\end{definition}
For instance, $18(3)_{10}=181818$ and $201(4)_{10}=201201201201$. Now we can state the main theorem of \cite{tsai} for a general base as follows.
\begin{theorem}[{\cite[Theorem 1]{tsai} for a general base}]\label{tsai1T}
Let $b\geq2$ be an integer. For every natural number $n$ with $b\nmid n$ and $n\neq r_b(n)$, there exists an integer $\omega\ge1$ such that for all integers $k\ge1$,
\begin{equation}
n(k)_b\in\mathbb{V}_b\quad\text{if and only if}\quad n(k+\omega)_b\in\mathbb{V}_b.
\end{equation}
\end{theorem}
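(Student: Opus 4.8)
\emph{Setup and reduction to condition (iii).} Let $L$ be the number of base $b$ digits of $n$, write $m=r_b(n)$, and set
\[
R_k=\frac{b^{Lk}-1}{b^{L}-1}=1+b^{L}+\cdots+b^{(k-1)L},
\]
so that by \eqref{wd} we have $n(k)_b=n\,R_k$. The plan begins with the observation that, because $b\nmid n$ forces the last digit $a_0$ of $n$ to be nonzero, reversing the digit string of $n(k)_b$ (which is $k$ copies of that of $n$) yields exactly $k$ copies of the reversed string of $n$; hence $r_b\bigl(n(k)_b\bigr)=m(k)_b=m\,R_k$. This immediately disposes of conditions (i) and (ii) for every $k$: since $a_0\neq0$ we have $b\nmid n(k)_b$, and since $n\neq r_b(n)$ gives $n\neq m$ and $R_k>0$ we have $n\,R_k\neq m\,R_k$. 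Therefore $n(k)_b\in\mathbb{V}_b$ is equivalent to condition (iii) alone, and the theorem reduces to showing that the function
\[
D(k):=v(n\,R_k)-v(m\,R_k)
\]
has a (purely) periodic zero set, for which it suffices that $D$ itself be periodic in $k\ge1$.

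\emph{Prime-by-prime decomposition and cancellation.} Writing $v_p(\cdot)$ for the $p$-adic valuation, the defining formula for $v$ reads $v(N)=\sum_p c_p\bigl(v_p(N)\bigr)$, where
\[
c_p(e)=\begin{cases}0,& e=0,\\ p,& e=1,\\ p+e,& e\ge 2.\end{cases}
\]
Since $v_p(n\,R_k)=v_p(n)+v_p(R_k)$ and likewise for $m$, I would expand
\[
D(k)=\sum_{p}\Bigl(c_p\bigl(v_p(n)+v_p(R_k)\bigr)-c_p\bigl(v_p(m)+v_p(R_k)\bigr)\Bigr).
\]
The crucial point is that every prime $p\nmid nm$ contributes $c_p(v_p(R_k))-c_p(v_p(R_k))=0$. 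Thus, although the primes dividing $R_k$ proliferate with $k$, they affect $n\,R_k$ and $m\,R_k$ identically and drop out; the sum collapses to a \emph{finite} sum over the fixed set of primes $p\mid nm$, independent of $k$.

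\emph{Periodicity of each summand.} For a fixed $p\mid nm$, set $\alpha=v_p(n)$, $\beta=v_p(m)$ and let $g=v_p(R_k)$. The key observation is that the summand
\[
\delta_p(k)=c_p(\alpha+g)-c_p(\beta+g)
\]
is constant on each of the three regimes $g=0$, $g=1$, and $g\ge2$: in the last regime both arguments exceed $1$, so $\delta_p(k)=(p+\alpha+g)-(p+\beta+g)=\alpha-\beta$ with the valuation $g$ cancelling. Hence $\delta_p(k)$ depends on $k$ only through which of $\{v_p(R_k)=0\}$, $\{v_p(R_k)=1\}$, $\{v_p(R_k)\ge2\}$ holds. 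The remaining, and main, task is to show these three level sets are periodic in $k$. Letting $d=\operatorname{ord}_p(b^{L})$ when $p\nmid b^{L}-1$, one checks $p\mid R_k\iff d\mid k$, and the Lifting-the-Exponent lemma expresses $v_p(R_k)$ in terms of $v_p(k/d)$ on the progression $d\mid k$, making each level set a union of arithmetic progressions; when $p\mid b^{L}-1$ one instead gets $v_p(R_k)=v_p(k)$ directly, which is manifestly periodic at the levels $0,1,\ge2$. I expect the only delicate points to be the case $p=2$ (where the $p=2$ form of Lifting the Exponent is needed) and the bookkeeping when $p\mid b^{L}$ (where $R_k\equiv1$ and $v_p(R_k)\equiv0$); these are routine but must be handled separately.

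\emph{Conclusion.} Each $\delta_p(k)$ is periodic, say with period $\omega_p$, and $D(k)=\sum_{p\mid nm}\delta_p(k)$ is then periodic with period $\omega=\lcm_{p\mid nm}\omega_p$. Consequently $D(k)=0\iff D(k+\omega)=0$ for all $k\ge1$, which is exactly the asserted equivalence $n(k)_b\in\mathbb{V}_b\iff n(k+\omega)_b\in\mathbb{V}_b$. The heart of the argument is the cancellation of the non-dividing primes, which reduces an a priori $k$-dependent infinite sum to a finite one; the technical obstacle is the valuation-periodicity input of the last paragraph.
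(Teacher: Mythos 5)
Your proposal is correct and takes essentially the same route as the proof this paper relies on (the paper defers to \cite[Theorem 1]{tsai}, whose base-ten argument ``adapts straightforwardly'' to base $b$ in exactly this way): reduce membership in $\mathbb{V}_b$ to the single equality $v(nR_k)=v(mR_k)$, cancel every prime not dividing $n\,r_b(n)$ so that only a finite, $k$-independent set of primes remains, and establish periodicity of the level sets $\{v_p(R_k)=0\}$, $\{v_p(R_k)=1\}$, $\{v_p(R_k)\ge 2\}$ via the multiplicative order of $b^L$ and lifting-the-exponent, with $p=2$ and $p\mid b$ treated separately. Your flagged delicate points (the $p=2$ form of LTE, where $v_2(R_k)=v_2(B+1)+v_2(k)-1$ for even $k$ rather than $v_2(k)$, and the trivial case $p\mid b$ where $v_p(R_k)=0$) are genuine but resolve exactly as you indicate, so no gap remains.
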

Based on this theorem, we can make the following definitions.
\begin{definition}
The smallest possible $\omega$ in the above theorem is denoted by $\omega_0(n)_b$. If the base $b$ digits of $n$ can be repeatedly concatenated to form a $v$-palindrome in base $b$, i.e., if there exists an integer $k\geq1$ such that $n(k)_b\in \mathbb{V}_b$, then the smallest $k$ is denoted by $c(k)_b$; otherwise we set $c(n)_b=\infty$.
\end{definition}
The sequence of numbers $n$ such that $c(n)_{10}<\infty$ has been created as sequence A338371 in the On-Line Encyclopedia of Integer Sequences \cite{oeis}. Hence there remains the problem of finding $\omega_0(n)_b$ and $c(n)_b$. \cite{tsai22} solves this problem for $b=10$ by associating to each $n$ a periodic function $\mathbb{Z}\to\{0,1\}$ which we describe in the next subsection.

\subsection{Associated periodic function}\label{sec:associated}

Fix a base $b\geq2$ and a natural number $n$ with $b\nmid n$ and $n\neq r_b(n)$ throughout this subsection. To have a clearer picture of the periodic phenomenon illustrated in Theorem \ref{tsai1T}, we define the function $I^n_b\colon \mathbb{N}\to\{0,1\}$ by setting
\begin{equation}
  I^n_b(k)=\begin{cases}
    0\quad\text{if $n(k)_b\notin\mathbb{V}_b$},\\
    1\quad\text{if $n(k)_b\in\mathbb{V}_b$}.
  \end{cases}
\end{equation}
Then by Theorem \ref{tsai1T} $I^n_b$ is a periodic function. It therefore has a unique periodic extension $I^n_b\colon\mathbb{Z}\to\{0,1\}$ which we give the same notation. By \cite[Theorem 11]{tsai22}, $I^n_b$ can be expressed as a linear combination when $b=10$, and the same holds for a general base. We first give notation for certain functions used to form the linear combination.
\begin{definition}
For a natural number $a$, denote by $I_a\colon\mathbb{Z}\to\{0,1\}$ the function defined by
\begin{equation}
  I_a(k)=\begin{cases}
    0\quad\text{if $a\nmid k$}\\
    1\quad\text{if $a\mid k$}.
  \end{cases}
\end{equation}
That is, $I_a$ is the indicator function of $a\mathbb{Z}$ in $\mathbb{Z}$.
\end{definition}
We can now state the linear combination as follows.
\begin{theorem}[{\cite[Theorem 11]{tsai22} for a general base}]
The function $I^n_b$ can be expressed in the form
\begin{equation}\label{eq:expr}
I^n_b=\lambda_1 I_{a_1}+\lambda_2 I_{a_2}+\cdots+\lambda_u I_{a_u},
\end{equation}
where the $u\geq0$, $1\leq a_1<a_2<\cdots<a_u$, and $\lambda_1,\lambda_2,\ldots,\lambda_u\neq0$ are integers.
\end{theorem}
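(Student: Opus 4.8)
The plan is to deduce the theorem from a single structural fact about the $\mathbb{Z}$-module generated by the indicators $I_a$, and then to verify the hypothesis of that fact by a $p$-adic valuation analysis of $n(k)_b$; write $\nu_p$ for the $p$-adic valuation. The key observation is that for a positive integer $N$ one has $\sum_{a\mid N}\lambda_a I_a(k)=\sum_{a\mid\gcd(k,N)}\lambda_a$, so any integer combination of the $I_a$ with $a\mid N$ is a function that depends only on $\gcd(k,N)$; conversely, M\"obius inversion over the divisor lattice of $N$ shows that every integer-valued function of $\gcd(k,N)$ arises this way, with integer coefficients $\lambda_a=\sum_{d\mid a}\mu(a/d)\,f_0(d)$, where $f_0$ records the values. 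Thus it suffices to produce a single modulus $N$ for which $I^n_b(k)$ depends only on $\gcd(k,N)$; the normalization $1\le a_1<\cdots<a_u$ with each $\lambda_i\ne 0$ then follows by discarding vanishing terms.

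Next I would reduce condition (iii) to finitely many primes. Conditions (i) and (ii) in fact hold for every $k\ge 1$: the last base $b$ digit of $n(k)_b$ is that of $n$, which is nonzero since $b\nmid n$, and $r_b(n(k)_b)=r_b(n)(k)_b$, which equals $n(k)_b$ only if $n=r_b(n)$. By \eqref{wd} I may write $n(k)_b=nM_k$ and $r_b(n(k)_b)=r_b(n)M_k$ with $M_k=(b^{Lk}-1)/(b^L-1)$. Writing $v(m)=\sum_p g_p(\nu_p(m))$, where $g_p(0)=0$, $g_p(1)=p$, and $g_p(j)=p+j$ for $j\ge 2$, the difference $v(nM_k)-v(r_b(n)M_k)$ splits as a sum over primes whose $p$-th term vanishes unless $\nu_p(n)\ne\nu_p(r_b(n))$. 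Hence only the finite set $P=\{p:\nu_p(n)\ne\nu_p(r_b(n))\}$ matters, and $I^n_b(k)=1$ exactly when $\sum_{p\in P}\phi_p(\nu_p(M_k))=0$, where $\phi_p(m)=g_p(\nu_p(n)+m)-g_p(\nu_p(r_b(n))+m)$. Since $g_p(j)=p+j$ for all $j\ge 2$, each $\phi_p(m)$ equals the constant $\nu_p(n)-\nu_p(r_b(n))$ once $m\ge 2$; consequently $\phi_p(\nu_p(M_k))$ depends on $k$ only through whether $\nu_p(M_k)$ is $0$, $1$, or $\ge 2$.

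It then remains to show that, for each $p\in P$, this three-way class of $\nu_p(M_k)=\nu_p(b^{Lk}-1)-\nu_p(b^L-1)$ depends only on $\gcd(k,N_p)$ for a suitable $N_p$. This is a lifting-the-exponent computation: if $p\mid b$ then $\nu_p(M_k)\equiv 0$; if $p$ is odd with $\mathrm{ord}_p(b)\mid L$ then $\nu_p(M_k)=\nu_p(k)$; if $p$ is odd with $\mathrm{ord}_p(b)\nmid L$ then $\nu_p(M_k)$ is $0$ unless $e_p\mid k$, where $e_p=\mathrm{ord}_p(b)/\gcd(\mathrm{ord}_p(b),L)>1$ and $p\nmid e_p$, in which case it equals a positive constant plus $\nu_p(k)$; and the case $p=2$ is handled by the even-exponent form of the lemma. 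In every case the class of $\nu_p(M_k)$ is governed by the conditions $e_p\mid k$ and $\nu_p(k)\le 1$, each detected by $\gcd(k,N_p)$ once $N_p$ is a multiple of $e_p\,p^2$. Taking $N=\mathrm{lcm}_{p\in P}N_p$, every $\phi_p(\nu_p(M_k))$, and hence $I^n_b(k)$, depends only on $\gcd(k,N)$, and the structural reduction completes the argument.

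The conceptual heart is the structural reduction, which identifies ``being an integer combination of the $I_a$'' with ``being a function of $\gcd(k,N)$'' and so converts the theorem into a divisibility statement. The technical heart, and the step I expect to be the main obstacle, is the lifting-the-exponent bookkeeping—particularly the irregular prime $p=2$ and the split according to whether $\mathrm{ord}_p(b)\mid L$—where one must confirm that only the coarse $\{0,1,\ge 2\}$ classification of $\nu_p(M_k)$ survives (rather than its exact value) and that this coarse class is genuinely a congruence condition on $k$.
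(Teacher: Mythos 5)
Your proposal is correct, and the comparison here is necessarily asymmetric: the paper does not prove this statement at all—it is quoted as ``\cite[Theorem 11]{tsai22} for a general base,'' with only the remark that the base-ten proof adapts straightforwardly—so you have supplied a genuine self-contained argument where the paper defers to a citation. The analytic core of your proof matches the route taken in \cite{tsai22}: writing $n(k)_b=nM_k$ and $r_b(n(k)_b)=r_b(n)M_k$ with $M_k=(b^{Lk}-1)/(b^L-1)$ (valid since $b\nmid n$ forces $r_b(n)$ to have $L$ digits too), observing that conditions (i) and (ii) hold for every $k$, splitting $v$ into local contributions $g_p(\nu_p(\cdot))$ so that only the finite set $P=\{p:\nu_p(n)\neq\nu_p(r_b(n))\}$ survives, and controlling $\nu_p(M_k)$ by lifting the exponent. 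Your case analysis there is accurate: $\nu_p(M_k)=0$ when $p\mid b$; $\nu_p(M_k)=\nu_p(k)$ when $p$ is odd and $\mathrm{ord}_p(b)\mid L$; $\nu_p(M_k)=\nu_p(b^{Le_p}-1)+\nu_p(k)$ or $0$ according as $e_p\mid k$ or not in the remaining odd case (using $e_p\mid \mathrm{ord}_p(b)\mid p-1$, hence $p\nmid e_p$); and at $p=2$ the even/odd-$k$ form of the lemma gives $\nu_2(M_k)=0$ for odd $k$ and $\nu_2(b^L+1)-1+\nu_2(k)$ for even $k$, so the class in $\{0,1,\geq2\}$ is a function of $\gcd(k,8)$. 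The step that only this coarse class matters is right because $g_p(j)=p+j$ for $j\geq2$ makes $\phi_p$ constant from $m=2$ onward. Where you genuinely diverge from \cite{tsai22} is the concluding structural step: rather than working inside the algebra of integer combinations of the $I_a$ (e.g.\ via $I_aI_{a'}=I_{\lcm(a,a')}$) to convert the vanishing condition $\sum_{p\in P}\phi_p(\nu_p(M_k))=0$ into such a combination, you identify the $\mathbb{Z}$-span of $\{I_a:a\mid N\}$ with the integer-valued functions of $\gcd(k,N)$ via M\"obius inversion, $\lambda_a=\sum_{d\mid a}\mu(a/d)f_0(d)$, which makes the theorem immediate once each $\phi_p(\nu_p(M_k))$ is seen to factor through $\gcd(k,N_p)$ and $N=\lcm_{p\in P}N_p$ is taken. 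This packaging is cleaner and also transparently yields the extension to the periodic extension on $\mathbb{Z}$, since both sides are periodic modulo $N$ and agree on $\mathbb{N}$; discarding zero coefficients then gives the normalized form with $u\geq0$ and $\lambda_i\neq0$, including the degenerate cases $I^n_b\equiv0$ or $\equiv1$.
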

Having expressed the function $I^n_b$ in the form of Equation \eqref{eq:expr}, we use the following result to find $\omega_0(n)_b$ and $c(n)_b$.
\begin{theorem}[{\cite[Corollaries 4 and 5]{tsai22} for a general base}]
The smallest period $\omega_0(n)_b$ and $c(n)_b$ can be found from the expression \eqref{eq:expr} by
\begin{align}
  \omega_0(n)_b&=\lcm\{a_1,a_2,\ldots,a_u\},\\
  c(n)_b&=\inf\{a_1,a_2,\ldots,a_u\}=\begin{cases}
  \infty\quad\text{if $u=0$}\\
    a_1\quad\text{if $u\geq1$}.
  \end{cases}
\end{align}
\end{theorem}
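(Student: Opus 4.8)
The plan is to treat the two assertions separately, deriving each directly from the explicit expression $I^n_b=\lambda_1 I_{a_1}+\cdots+\lambda_u I_{a_u}$ of Equation \eqref{eq:expr}, recalling that $1\le a_1<\cdots<a_u$ and each $\lambda_i\neq 0$. Throughout I set $P:=\lcm\{a_1,\ldots,a_u\}$ (with the convention $P=1$ when $u=0$) and work with the periodic extension $I^n_b\colon\mathbb{Z}\to\{0,1\}$. I would dispose of $c(n)_b$ first, as it is the quick part. When $u=0$ the sum is empty, so $I^n_b\equiv 0$, no concatenation lies in $\mathbb{V}_b$, and $c(n)_b=\infty$ as claimed. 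When $u\geq 1$, I evaluate $I^n_b(k)=\sum_{i:\,a_i\mid k}\lambda_i$ for $1\le k\le a_1$: for $1\le k<a_1$ no $a_i$ (all $\ge a_1>k$) divides $k$, so $I^n_b(k)=0$; at $k=a_1$ only $a_1$ divides $a_1$, so $I^n_b(a_1)=\lambda_1$. Since $I^n_b$ is $\{0,1\}$-valued and $\lambda_1\neq 0$, necessarily $\lambda_1=1$, so $a_1$ is the least $k\ge 1$ with $n(k)_b\in\mathbb{V}_b$, i.e.\ $c(n)_b=a_1=\inf\{a_1,\ldots,a_u\}$.

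For $\omega_0(n)_b$, I first observe that each $I_{a_i}$ has period $a_i\mid P$, so $P$ is a period of $I^n_b$; since the periods of a periodic function form a subgroup $\omega_0(n)_b\mathbb{Z}$ of $\mathbb{Z}$, this gives $\omega_0(n)_b\mid P$, and it remains to prove $P\mid \omega_0(n)_b$. If this failed there would be a prime $p\mid P$ with $v_p(\omega_0(n)_b)<v_p(P)$, where $v_p$ denotes the $p$-adic valuation; then $\omega_0(n)_b\mid P/p$, so $P/p$ would be a period. Hence it suffices to show, for every prime $p\mid P$, that $I^n_b$ does \emph{not} have period $P/p$.

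To do this I pass to the discrete Fourier transform on $\mathbb{Z}/P\mathbb{Z}$, writing $\hat g(j)=\sum_{k\bmod P}g(k)\,\zeta^{-jk}$ with $\zeta=e^{2\pi i/P}$. A geometric-sum computation gives $\widehat{I_a}(j)=P/a$ when $(P/a)\mid j$ and $0$ otherwise, and $g$ has period $P/p$ exactly when $\hat g$ is supported on multiples of $p$. So I aim to exhibit a frequency $j$ with $p\nmid j$ and $\widehat{I^n_b}(j)\neq 0$. Let $m=\max_i v_p(a_i)$ and let $i^{*}$ be the \emph{largest} index with $v_p(a_{i^{*}})=m$; put $j=P/a_{i^{*}}$, so $v_p(j)=m-m=0$ and $p\nmid j$. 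An index $i$ contributes to $\widehat{I^n_b}(j)$ only if $(P/a_i)\mid(P/a_{i^{*}})$, i.e.\ $a_{i^{*}}\mid a_i$; but then $v_p(a_i)\ge m$ forces $v_p(a_i)=m$ and $a_i\ge a_{i^{*}}$, while maximality of the index $i^{*}$ (recall the $a_i$ increase) forces $a_i\le a_{i^{*}}$, hence $i=i^{*}$. Thus exactly one term survives, $\widehat{I^n_b}(j)=\lambda_{i^{*}}(P/a_{i^{*}})\neq 0$, and $I^n_b$ is not periodic modulo $P/p$. This yields $\omega_0(n)_b=P=\lcm\{a_1,\ldots,a_u\}$.

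The main obstacle is exactly this last step: because the $\lambda_i$ may have either sign and the moduli may divide one another, naive evaluations (for instance comparing $I^n_b(0)$ with $I^n_b(P/p)$) can vanish through cancellation and fail to detect that $P/p$ is not a period. Choosing the largest modulus of maximal $p$-adic valuation is precisely what forces a \emph{single} surviving term in the relevant Fourier coefficient, sidestepping all cancellation. The same single-term phenomenon shows that the functions $I_{a_1},\ldots,I_{a_u}$ are linearly independent, so the expression \eqref{eq:expr} is unique and the quantities $\omega_0(n)_b$ and $c(n)_b$ are genuinely well defined by these formulas.
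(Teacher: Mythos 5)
Your proof is correct, but it cannot be compared line-by-line with the paper, because the paper does not prove this statement at all: it is imported from \cite[Corollaries 4 and 5]{tsai22} with the remark that the base-ten arguments generalize straightforwardly. So you have in effect supplied the missing proof, and it holds up. The easy half of each claim is handled exactly as one would expect: the empty sum gives $c(n)_b=\infty$ when $u=0$; evaluating at $1\le k\le a_1$ isolates $\lambda_1$ (and your observation that the $\{0,1\}$-valuedness of $I^n_b$ forces $\lambda_1=1$ is a nice touch, though not needed for the statement); and $\lcm\{a_i\}$ is a period because each $a_i$ divides it, so $\omega_0(n)_b\mid P$ via the subgroup of periods. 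The genuinely delicate half is showing $P/p$ fails to be a period for every prime $p\mid P$, where, as you say, naive pointwise evaluations can die by cancellation among the signed $\lambda_i$. Your device --- pass to the DFT on $\mathbb{Z}/P\mathbb{Z}$, note $\widehat{I_a}(j)=(P/a)\cdot[\,(P/a)\mid j\,]$, and choose $j=P/a_{i^*}$ with $i^*$ the \emph{largest} index of maximal $p$-adic valuation so that the divisibility condition $a_{i^*}\mid a_i$ together with monotonicity of the $a_i$ forces a single surviving term --- is airtight; I checked the equivalence $(P/a_i)\mid(P/a_{i^*})\iff a_{i^*}\mid a_i$ and the criterion that period $P/p$ is equivalent to the spectrum being supported on multiples of $p$, and both are right. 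The cited source \cite{tsai22} reaches the same formulas by more elementary manipulations of periodic indicator functions; your spectral route buys something extra, namely the linear independence of $I_{a_1},\ldots,I_{a_u}$ and hence uniqueness of the expression \eqref{eq:expr}, which the paper tacitly relies on when it reads $\omega_0(n)_b$ and $c(n)_b$ off that expression. One cosmetic caveat: the quantities $\omega_0(n)_b$ and $c(n)_b$ are defined in the paper via the restriction of $I^n_b$ to $\mathbb{N}$, so you should say a word (as the paper's setup permits) that the smallest one-sided period coincides with the smallest period of the extension to $\mathbb{Z}$; this is immediate since the extension is unique, but it is the one hinge between your $\mathbb{Z}/P\mathbb{Z}$ computation and the stated theorem.
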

This infimum is thought of as that in the extended real number system.

We did not say how to express $I^n_b$ in the form of Equation \eqref{eq:expr}. A definite procedure for doing this, for $b=10$, is described in \cite{tsai22}, and is easily adapted for a general base.

\subsection{One implies infinitely many.}\label{sec:oneinf}

We show that if there exists a $v$-palindrome in base $b$, then there exist infinitely many.

\begin{theorem}\label{thm:inf}
Let $b\geq2$ be an integer. If there exists a $v$-palindrome in base $b$, then there exist infinitely many $v$-palindromes in base $b$.
\end{theorem}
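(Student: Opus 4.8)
The plan is to obtain the infinitude essentially for free from the periodicity theorem, Theorem \ref{tsai1T}. The key observation is that a single $v$-palindrome already satisfies the hypotheses of that theorem.

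First I would note that if $n \in \mathbb{V}_b$, then conditions (i) and (ii) in the definition of a $v$-palindrome give $b \nmid n$ and $n \neq r_b(n)$, so Theorem \ref{tsai1T} applies and yields an integer $\omega \geq 1$ such that, for all integers $k \geq 1$, we have $n(k)_b \in \mathbb{V}_b$ if and only if $n(k+\omega)_b \in \mathbb{V}_b$. Since $n = n(1)_b$, the starting value $k = 1$ lies in $\mathbb{V}_b$ by hypothesis, so iterating the forward implication (an easy induction on $m$) gives $n(1+m\omega)_b \in \mathbb{V}_b$ for every integer $m \geq 0$.

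It then remains only to check that these numbers are pairwise distinct. By Definition \ref{def:rc}, writing $L$ for the number of base $b$ digits of $n$, we have $n(k)_b = n\cdot\frac{1-b^{Lk}}{1-b^L}$, which is strictly increasing in $k$; hence the numbers $n(1+m\omega)_b$ for $m = 0,1,2,\ldots$ are distinct, and we conclude that $\mathbb{V}_b$ is infinite.

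I expect no real obstacle, as all the difficulty is absorbed into Theorem \ref{tsai1T}. The only point meriting attention is that the equivalence advances $k$ only by $\omega$ at a time: one must therefore anchor the induction at the known value $k = 1$ supplied by the hypothesized $v$-palindrome and step upward, rather than trying to realize every large $k$ directly.
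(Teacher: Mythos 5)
Your proposal is correct and takes essentially the same route as the paper: the paper's proof also rests entirely on the periodicity from Theorem \ref{tsai1T} (packaged via the indicator function $I^n_b$, with $I^n_b(1)=1$ propagating to $I^n_b(1+m\omega)=1$ for all $m\geq0$), producing the same family $n(1)_b, n(1+\omega)_b, n(1+2\omega)_b,\ldots$ of $v$-palindromes. Your explicit check that these values are pairwise distinct, via the monotonicity of $n(k)_b = n\cdot\frac{1-b^{Lk}}{1-b^L}$ in $k$, is a small point the paper leaves implicit, not a different argument.
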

\begin{proof}
Suppose that $n$ is a $v$-palindrome in base $b$. We have the associated function $I^n_b$ from the previous subsection. If $n$ is a $v$-palindrome in base $b$ that means $I^n_b(1)=1$. Since $I^n_b$ is periodic, say with period $\omega$, we see that
\begin{equation}
I^n_b(1)=I^n_b(1+\omega)=I^n_b(1+2\omega)=\cdots.
\end{equation}
Consequently,
\begin{equation}
n(1)_b,n(1+\omega)_b,n(1+2\omega)_b,\ldots
\end{equation}
are all $v$-palindromes in base $b$.
\end{proof}

\section{\texorpdfstring{$v$}{v}-palindromes in bases \texorpdfstring{$p+1$}{p+1} and \texorpdfstring{$p^2+1$}{p\string^2 + 1}}
\label{sec:vpalb}

In this section we give more examples of $v$-palindromes in bases other than ten. In Subsection \ref{sec:vpalp1}, we give examples of $v$-palindromes in bases $p+1$, for each odd prime $p$. In Subsection \ref{sec:vpalpp1}, we give examples of $v$-palindromes in bases $p^2+1$, for each odd prime $p$. We first prove the following lemmas.

\begin{lemma}
\label{lem:12}
Let $n \in \mathbb{N}$. Then $r_{n+1}(2n) = n^2$.
\end{lemma}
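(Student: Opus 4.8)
The plan is to understand the base-$(n+1)$ representation of the number $2n$ and then reverse its digits. First I would observe that since $n < n+1$, the number $2n = n + n$ lies in the range $[n+1, (n+1)^2)$ whenever $n \geq 1$, so $2n$ has exactly two digits in base $n+1$. Explicitly, I would write $2n = 1\cdot(n+1) + (n-1)$, which exhibits the base-$(n+1)$ digits as $a_1 = 1$ and $a_0 = n-1$, both valid since $0 \leq n-1 < n+1$ and the leading digit $1$ is nonzero. Thus $2n = (1, n-1)_{n+1}$.

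Next I would apply the definition of digit reversal in base $b = n+1$ to this two-digit representation. By the formula $r_b(n) = a_{L-1} + a_{L-2}b + \cdots + a_0 b^{L-1}$ with $L = 2$, reversing gives $r_{n+1}(2n) = a_0 + a_1(n+1) = (n-1) + 1\cdot(n+1)$. Carrying out this elementary addition yields $(n-1) + (n+1) = 2n$, which is not yet $n^2$, so I would need to recheck the digit assignment.

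The main subtlety, and where I would slow down, is getting the digits in the correct order. Reading $2n = (1, n-1)_{n+1}$ means the most significant digit is $a_1 = 1$ and the least significant is $a_0 = n-1$; the reversal swaps these, producing the number whose most significant digit is $n-1$ and least significant is $1$, namely $r_{n+1}(2n) = (n-1)(n+1) + 1$. Then $(n-1)(n+1) + 1 = n^2 - 1 + 1 = n^2$, as desired. So the one genuine care-point is tracking which digit is high-order versus low-order; once that is pinned down, the result is a direct computation.

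I should also handle the boundary case $n = 1$ separately or note that it behaves correctly: when $n = 1$, we have $2n = 2 = (1,0)_2$, and $r_2(2) = 1 = 1^2$, consistent with the example $r_2(2) = 1$ given in the excerpt. For $n = 0$ the statement is vacuous or trivial depending on conventions, so I would restrict attention to $n \geq 1$, which is the intended range. In summary, the proof reduces to verifying the two-digit expansion $2n = (1, n-1)_{n+1}$ and then computing the reversal, with the only real obstacle being careful bookkeeping of digit positions.
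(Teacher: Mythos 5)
Your proof is correct and takes essentially the same approach as the paper: both identify the two-digit expansion $2n = 1\cdot(n+1) + (n-1) = (1,\,n-1)_{n+1}$ and compute the reversal as $(n-1)(n+1) + 1 = n^2$, the paper merely packaging the digit extraction as the algebraic rewriting $2n = (n+1) + (n+1-2)$. The false start in your second paragraph (plugging the digits into the reversal formula in the wrong slots, yielding $2n$) is explicitly caught and corrected in the third, so the final argument stands as written.
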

\begin{proof}
We have

\begin{align*}
    r_{n+1}(2n)
        \ &=\  r_{n+1}(2(n+1-1))\\
        &=\  r_{n+1}(2n+2-2) \\
        &=\  r_{n+1}(n+1+(n+1-2)) \\
        &=\  (n+1-2)\cdot (n+1) + 1 \\
        &=\  (n+1)^2 -2(n+1) + 1 \\
        &=\  (n+1-1)^2 \\
&=\  n^2.
\end{align*}
\end{proof}

\begin{lemma}
\label{lem:11}
Let $p$ be an odd prime. Then $v(2p)=v(p^2)$.
\end{lemma}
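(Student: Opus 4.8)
The plan is to evaluate both sides directly from the definition of $v$ and observe that they coincide; the entire content is in correctly identifying which primes appear with exponent $\geq 2$ and which appear to the first power in each factorization.

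First I would compute $v(2p)$. Since $p$ is an odd prime we have $p \neq 2$, so $2p = 2\cdot p$ exhibits $2p$ as a product of two \emph{distinct} primes, each occurring to the first power. In the notation of Equation \eqref{eq:fact} this is the case $s=0$, $t=2$ with $q_1 = 2$ and $q_2 = p$, giving $v(2p) = 2 + p$. (Equivalently, one may invoke the additivity of $v$ noted after its definition: as $\gcd(2,p)=1$, we have $v(2p) = v(2) + v(p) = 2 + p$.) Next I would compute $v(p^2)$, which is a single prime power corresponding to $s=1$, $t=0$, $p_1 = p$, $\varepsilon_1 = 2$, so that $v(p^2) = p_1 + \varepsilon_1 = p + 2$.

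Comparing the two, $v(2p) = p + 2 = v(p^2)$, which is the assertion. There is no real obstacle here beyond the bookkeeping; the only point that genuinely uses the hypothesis is that $2$ and $p$ be distinct primes, so that the two prime factors of $2p$ contribute $2 + p$ rather than collapsing into a single prime power (as they would if $p = 2$, where $2p = p^2 = 4$ and there is nothing to prove). Accordingly I expect the write-up to be a short three-line calculation.
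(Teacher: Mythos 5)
Your proof is correct and matches the paper's, which likewise computes $v(2p) = v(2) + v(p) = 2 + p = v(p^2)$ via additivity; your direct evaluation from the definition of $v$ is the same calculation in slightly expanded form. Your remark that the hypothesis $p \neq 2$ is what prevents the factors from collapsing into $p^2 = 4$ is a sound observation the paper leaves implicit.
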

\begin{proof}
We find that
\begin{equation}
 v(2p)\ =\  v(2) + v(p) \ =\  2 + p \ =\  v(p^2).
\end{equation}
\end{proof}

\subsection{\texorpdfstring{$v$}{v}-palindromes in base \texorpdfstring{$p+1$}{p+1}}\label{sec:vpalp1}
We have the following theorem.
\begin{theorem}
\label{thm1}
Let $p$ be an odd prime. Then $2p\in\mathbb{V}_{p+1}$.
\end{theorem}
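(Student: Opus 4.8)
The plan is to verify directly the three defining conditions of a $v$-palindrome in base $b = p+1$ for the number $n = 2p$, leaning on the two lemmas just established. The decisive condition (iii) is essentially free: applying Lemma~\ref{lem:12} with $n = p$ gives $r_{p+1}(2p) = p^2$, and then Lemma~\ref{lem:11} yields $v(2p) = v(p^2) = v(r_{p+1}(2p))$. So the entire arithmetic content of condition (iii) is packaged into the two preliminary lemmas, and nothing further is needed there.

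What remains is to check conditions (i) and (ii), which are elementary. First I would record the base $p+1$ expansion of $2p$. Since $2p = 1\cdot(p+1) + (p-1)$ with $0 \le p-1 < p+1$, we have $2p = (1, p-1)_{p+1}$, a two-digit representation whose leading digit $1$ is nonzero and whose low digit $p-1 \ge 2$ is legitimate because $p \ge 3$. For condition (i), note $2p \equiv -2 \pmod{p+1}$; since $p+1 \ge 4 > 2$, we have $(p+1)\nmid 2$, hence $(p+1)\nmid 2p$. For condition (ii), the digit reversal is $r_{p+1}(2p) = (p-1, 1)_{p+1} = (p-1)(p+1) + 1 = p^2$, so condition (ii) reduces to $2p \neq p^2$, which holds precisely because $p \neq 2$, i.e., because $p$ is odd. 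With (i), (ii), and (iii) all confirmed, $2p \in \mathbb{V}_{p+1}$ by definition.

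I do not anticipate a genuine obstacle here, as the substantive steps have been isolated into Lemma~\ref{lem:12} (the reversal identity $r_{p+1}(2p)=p^2$) and Lemma~\ref{lem:11} (the additivity computation $v(2p)=2+p=v(p^2)$); the theorem is their assembly. The only point demanding a sentence of care is confirming that both $2p$ and its reversal $p^2$ genuinely have nonzero leading digits in base $p+1$, so that the reversal is well defined and the two numbers have the same digit length, which is exactly why the oddness hypothesis $p \ge 3$ is invoked rather than merely $p$ being prime.
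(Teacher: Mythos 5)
Your proof is correct and takes essentially the same route as the paper: both verify the three defining conditions from the representation $2p=(1,p-1)_{p+1}$, obtain $r_{p+1}(2p)=(p-1,1)_{p+1}=p^2$ (the paper recomputes this inline rather than citing Lemma~\ref{lem:12}), and conclude with $v(2p)=2+p=v(p^2)$ via Lemma~\ref{lem:11}, with oddness of $p$ securing $2p\neq p^2$. The only differences are cosmetic, such as your congruence check $2p\equiv -2\pmod{p+1}$ for condition (i) in place of reading off the nonzero last digit $p-1$.
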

\begin{proof}
We have $2p = (1,p-1)_{p+1}$ and so
\begin{equation}\label{eq:reverse}
    r_{p+1}(2p) = (p-1,1)_{p+1} = (p-1)(p+1)+1 = p^2.
\end{equation}
It is clear from the base $p+1$ representation of $2p$, namely $(1,p-1)_{p+1}$, that we have $p+1\nmid 2p$ and $2p \neq r_{p+1}(2p)$. Finally, because $p$ is an odd prime, and using Equation \eqref{eq:reverse}, we have
\begin{equation}
    v(2p) = 2+p = v(p^2) = v(r_{p+1}(2p)).
\end{equation}
This shows that $2p\in\mathbb{V}_{p+1}$.
\end{proof}

Next we consider repeated concatenations of $2p$ in base $p+1$, where $p$ is an odd prime. As in the proof of Theorem \ref{thm1}, both $2p$ and $p^2$ are two digits long in base $p+1$. Using similar notation to \cite{tsai}, we define 
\begin{equation}
\rho_{k,2} \ :=\  \sum_{i=0}^{k-1} (p+1)^{2i} \ =\  1 + \sum_{i=1}^{k-1} (p+1)^{2i},
\end{equation}
for integers $k\geq1$. We find that $2 \mid p + 1$ as $p$ is odd, so $2 \mid \sum_{i=1}^{k-1} (p+1)^{2i}$, and hence $2 \nmid \rho_{k,2}$. We also want to know when $p$ is coprime with $\rho_{k,2}$. 

Note that
\begin{equation}
\label{11}
 \rho_{k,2} \ =\  \sum_{i=0}^{k-1} (p+1)^{2i}\  \equiv\  \sum_{i=0}^{k-1} 1^{2i}\  \equiv\  k \mod{p}.
 \end{equation}
Thus, $p \mid \rho_{k,2}$ if and only if $p \mid k$. We use these two facts to prove the following theorem, recalling that $(2p)(k)_{p+1}$ and $(p^2)(k)_{p+1}$ denote repeated concatenations according to Definition \ref{def:rc}.

\begin{theorem}
Let $p$ be an odd prime and $k\in\mathbb{N}$. Then $(2p)(k)_{p+1} \in \mathbb{V}_{p+1}$ if and only if $p\nmid k$.
\end{theorem}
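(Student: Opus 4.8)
The plan is to compute $v\bigl((2p)(k)_{p+1}\bigr)$ and $v\bigl(r_{p+1}((2p)(k)_{p+1})\bigr)$ and compare them, using the additivity of $v$ to reduce the computation of each side to a product of the ``block'' $\rho_{k,2}$ with the simple numbers $2p$ and $p^2$. First I would record the factorization of the concatenation itself. By Definition \ref{def:rc}, since $2p$ is two digits long in base $p+1$, we have $(2p)(k)_{p+1} = 2p \cdot \rho_{k,2}$. The digit reversal of a $k$-fold concatenation of a two-digit block is the $k$-fold concatenation of the reversed block, so by Theorem \ref{thm1} (specifically Equation \eqref{eq:reverse}, $r_{p+1}(2p)=p^2$) I would argue that $r_{p+1}\bigl((2p)(k)_{p+1}\bigr) = p^2 \cdot \rho_{k,2}$. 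This reversal identity is the one genuinely structural step and I expect it to be the main obstacle: I must justify carefully that reversing the full string of $2k$ digits simply reverses each length-two block and reverses their order, which amounts to checking that the leading digit of each copy of $2p$ (namely $1$) and the trailing digit (namely $p-1$) interleave correctly, and that no digit equals $0$ in a position that would shorten the reversal. Here condition (i), $p+1 \nmid (2p)(k)_{p+1}$, follows because the last base-$(p+1)$ digit is $p-1 \neq 0$.

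Next I would verify conditions (i) and (ii) of the $v$-palindrome definition. Condition (i) is handled above. For condition (ii), $(2p)(k)_{p+1} \neq r_{p+1}((2p)(k)_{p+1})$, it suffices to note $2p \neq p^2$ for odd $p$, which propagates to the concatenations since $\rho_{k,2} \geq 1$. The crux is then condition (iii) together with the coprimality hypothesis. Using additivity of $v$, I would write
\begin{align}
v\bigl((2p)(k)_{p+1}\bigr) &= v(2p) + v(\rho_{k,2}), \nonumber \\
v\bigl(r_{p+1}((2p)(k)_{p+1})\bigr) &= v(p^2) + v(\rho_{k,2}), \nonumber
\end{align}
\emph{provided} $2p$ is coprime to $\rho_{k,2}$ and $p^2$ is coprime to $\rho_{k,2}$. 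This is exactly where the two facts established just before the theorem enter: we have $2 \nmid \rho_{k,2}$ always, and by Equation \eqref{11}, $p \mid \rho_{k,2}$ if and only if $p \mid k$. Hence both $\gcd(2p,\rho_{k,2})=1$ and $\gcd(p^2,\rho_{k,2})=1$ hold precisely when $p \nmid k$, licensing the additive splitting. Given this, Lemma \ref{lem:11} gives $v(2p)=v(p^2)$, so the two displayed quantities are equal and condition (iii) holds; this yields the ``if'' direction, namely $p \nmid k \implies (2p)(k)_{p+1} \in \mathbb{V}_{p+1}$.

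For the converse I would prove the contrapositive: assume $p \mid k$ and show $(2p)(k)_{p+1} \notin \mathbb{V}_{p+1}$. When $p \mid k$, we have $p \mid \rho_{k,2}$, so $p$ divides $(2p)(k)_{p+1}=2p\cdot\rho_{k,2}$ to a higher power than it divides $p^2 \cdot \rho_{k,2}$ on the reversed side in a way that is no longer compensated by the clean additive splitting; concretely, the extra factor of $p$ now shared between $2p$ (or $p^2$) and $\rho_{k,2}$ changes the exponent of $p$ in the full factorization and thereby alters $v$ asymmetrically between $n$ and $r_{p+1}(n)$. I would make this precise by isolating the exact power of $p$ dividing $\rho_{k,2}$, say $p^m \| \rho_{k,2}$ with $m \geq 1$, and computing $v$ of each side exactly: on the $n$-side the prime $p$ appears with exponent $1+m$, contributing $p + (1+m)$, while on the reversal side it appears with exponent $2+m$, contributing $p + (2+m)$, and the remaining coprime parts (the factor $2$, and $\rho_{k,2}/p^m$) contribute equally to both. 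The difference is then $1 \neq 0$, so $v(n) \neq v(r_{p+1}(n))$ and condition (iii) fails. The subtle point to check here is that $2$ remains coprime to $\rho_{k,2}$ even when $p \mid k$, so the factor-of-$2$ contribution is genuinely common to both sides; this follows from $2 \nmid \rho_{k,2}$ as already observed. Combining both directions completes the proof.
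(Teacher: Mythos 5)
Your proposal is correct and follows essentially the same route as the paper: the decomposition $(2p)(k)_{p+1}=2p\cdot\rho_{k,2}$ with reversal $p^2\cdot\rho_{k,2}$, additivity of $v$ plus $v(2p)=v(p^2)$ for the direction $p\nmid k$, and for the converse the extraction of the exact power $p^m\,\Vert\,\rho_{k,2}$ and an explicit comparison of exponents (the paper writes $\rho_{k,2}=p^a n$ with $p\nmid n$, $2\nmid n$, and computes $v(2\cdot p^{a+1}\cdot n)=2+p+a+1+v(n)$ against $v(p^{a+2}\cdot n)=p+a+2+v(n)$, arguing by contradiction where you argue by contrapositive, which is the same thing). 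One minor bookkeeping slip worth fixing: the factor of $2$ is \emph{not} common to both sides, since the reversal $p^2\cdot\rho_{k,2}$ is odd; the $2$ contributes $v(2)=2$ only to $v(n)$, so the true difference is $v(n)-v(r_{p+1}(n))=2+\bigl(p+1+m\bigr)-\bigl(p+2+m\bigr)=1$, whereas your accounting (with the $2$ cancelling) would give $-1$ --- the difference is nonzero either way, so your conclusion that condition (iii) fails is unaffected.
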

\begin{proof}
Since $2p = (1,p-1)_{p+1}$, we have $r_{p+1}(2p) = (p-1,1)_{p+1} = p^2$ and that
\begin{equation} \label{eq:11}
   (2p) (k)_{p+1}\  =\  2p\cdot \rho_{k,2},\quad r_{p+1}(2p\cdot \rho_{k,2})\  =\  p^2\cdot \rho_{k,2}.
\end{equation}

Since $\rho_{k,2}$ is an odd number and from Equation \eqref{11}, given $p \nmid k$ we know $p$ is coprime with $\rho_{k,2}$. Using the additive property of $v$, Lemma \ref{lem:11}, and Equation \eqref{eq:11}, we find,
\begin{align*}
    v(2p \cdot \rho_{k,2}) 
        \ &=\  v\left(2p\right) + v(\rho_{k,2}) 
        \\
        &=\  v\left(p^2\right) + v(\rho_{k,2})
        \\
        &=\  v\left(p^2 \cdot \rho_{k,2}\right)\\
        &=\  v\left(r_{p+1}\left(2p \cdot  \rho_{k,2}\right)\right). \\
\end{align*}
Since $2p\cdot \rho_{k,2} = (1,p-1,\dots,1,p-1)_{p+1}$, we know that $p+1 \nmid 2p\cdot \rho_{k,2}$. Lastly, we know that $2p \cdot \rho_{k,2} \neq r_{p+1}\left(2p \cdot \rho_{k,2} \right) = p^2 \cdot \rho_{k,2}$ as $2p \neq p^2$, therefore $(2p)(k)_{p+1} \in \mathbb{V}_{p+1}$.\\

Conversely, assume $(2p)(k)_{p+1} \in \mathbb{V}_{p+1}$. Suppose $p\mid k$. From Equation \eqref{eq:11}, $(2p)(k)_{p+1}=2p \cdot \rho_{k,2}$ and $r_{p+1}((2p)(k)_{p+1})=p^2\cdot  \rho_{k,2}$. From Equation \eqref{11} we know $\rho_{k,2} \equiv  k \mod{p}$. Since $p\mid k$ we have
\begin{equation*}
    \rho_{k,2}\  \equiv\ 0 \mod{p},
\end{equation*}
so $p\mid \rho_{k,2}$. Hence we can rewrite $\rho_{k,2}$ as $\rho_{k,2}=p^a\cdot n$, where $a,n \in \mathbb{N}$ and $p \nmid n$. Note that $\rho_{k,2}$ is odd so $2\nmid n$. Applying $v$ to $(2p)(k)_{p+1}$ we find
\begin{equation*}
    v((2p)(k)_{p+1})\ =\ v(2p\cdot \rho_{k,2})\ =\ v(2\cdot p^{a+1}\cdot n)\ =\ 2+p+a+1+v(n)
\end{equation*}
and
\begin{equation*}
    v(r_{p+1}((2p)(k)_{p+1}))\ =\ v(p^2\cdot \rho_{k,2})\ =\ v(p^{a+2}\cdot n)\ =\ p+a+2+v(n),
\end{equation*}
which contradicts the fact that $(2p)(k)_{p+1} \in \mathbb{V}_{p+1}$. Hence if $(2p)(k)_{p+1} \in \mathbb{V}_{p+1}$, then $p\nmid k$.
\end{proof}

For our last pattern of $v$-palindromes in these bases, we require the following lemma.

\begin{lemma} \label{lem:lngshort}
Let $n,k \in \mathbb{N}$ with $n\geq2$ and $b=n+1$.  Then
\begin{enumerate}
    \item $(1,\underbrace{n,\dots,n}_{k}, n - 1 )_{n+1}\ =\  2n \cdot 1(k+1)_b $, \text{ and}
    \item $(n - 1,\underbrace{n,\dots,n}_{k}, 1 )_{n+1}\  =\  n^2 \cdot 1(k+1)_b$.
\end{enumerate}
\end{lemma}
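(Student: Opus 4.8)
The plan is to reduce both identities to a single closed-form evaluation of the repunit-type factor $1(k+1)_b$, and then expand each left-hand side by the positional formula in base $b=n+1$, exploiting the fact that $b-1=n$ to collapse the resulting geometric series cleanly.

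First I would record the closed form of the common factor. By Definition \ref{def:rc} applied to the one-digit number $1$ (so $L=1$ and the number of copies is $k+1$), we have $1(k+1)_b = 1 + b + \cdots + b^k = (b^{k+1}-1)/(b-1)$, and since $b-1=n$ this equals $(b^{k+1}-1)/n$. Substituting this into the right-hand sides turns the two target quantities into
\[
2n\cdot\frac{b^{k+1}-1}{n}=2\bigl(b^{k+1}-1\bigr)\quad\text{and}\quad n^2\cdot\frac{b^{k+1}-1}{n}=n\bigl(b^{k+1}-1\bigr),
\]
respectively, which clears the division and leaves two clean quantities to match.

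Next I would expand the left-hand sides directly. For part (1) the digits are $1$ in position $k+1$, the digit $n$ in each of the positions $1,\ldots,k$, and $n-1$ in position $0$, so its value is $b^{k+1}+n\sum_{i=1}^{k}b^i+(n-1)$. The middle term simplifies by $n\sum_{i=1}^{k}b^i = n\cdot b\cdot (b^k-1)/(b-1)=b(b^k-1)=b^{k+1}-b$, again using $b-1=n$. Collecting terms gives $2b^{k+1}-b+n-1$, and since $b=n+1$ the constant part is $-b+n-1=-2$, yielding $2(b^{k+1}-1)$, exactly the reduced right-hand side. Part (2) is handled identically: its value is $(n-1)b^{k+1}+n\sum_{i=1}^{k}b^i+1=(n-1)b^{k+1}+(b^{k+1}-b)+1=nb^{k+1}-b+1$, and $-b+1=-n$ gives $n(b^{k+1}-1)$, as required.

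There is no serious obstacle here; the proof is a bookkeeping exercise, and the hypothesis $n\geq 2$ is used only to ensure the digits $n-1,\,n,\,1$ are all legitimate (and nonzero where they are leading). The one point requiring care is indexing the digit positions correctly—the leading digit sits at position $k+1$ and there are exactly $k$ interior copies of $n$—together with the geometric-series cancellation, which works precisely because $b-1=n$. I would also note the structural symmetry that the digit string in (2) is the reversal of that in (1), so that $(n-1,n,\ldots,n,1)_{n+1}=r_{n+1}\bigl((1,n,\ldots,n,n-1)_{n+1}\bigr)$; one could therefore deduce (2) from (1) by a reversal argument in the spirit of Lemma \ref{lem:12}, but since the direct expansion of both sides is equally short, I would simply present the two parallel computations.
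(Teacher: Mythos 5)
Your proof is correct, but it takes a genuinely different route from the paper's. The paper proves both identities by digit-level multiplication: it writes $2n=(1,n-1)_{n+1}$ and $n^2=(n-1,1)_{n+1}$ and multiplies each two-digit string directly by the repunit $(1,\ldots,1)_{n+1}$, so that the $k+1$ shifted copies overlap and each interior digit becomes $(n-1)+1=n$; the tacit point there is that no carries occur, since every digit sum is at most $n<b$. You instead work in closed form: $1(k+1)_b=(b^{k+1}-1)/(b-1)$ with $b-1=n$ reduces the right-hand sides to $2(b^{k+1}-1)$ and $n(b^{k+1}-1)$, and you expand the left-hand sides as polynomials in $b$, collapsing the geometric series via $n\sum_{i=1}^{k}b^i=b^{k+1}-b$ and the constant simplifications $-b+n-1=-2$ and $-b+1=-n$. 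Every step checks out, your indexing (leading digit in position $k+1$, exactly $k$ interior copies of $n$) is right, and your remark on where $n\geq 2$ is needed---to keep the leading digit $n-1$ in part (2) nonzero and all digits below $b$---is accurate. As for what each approach buys: the paper's carry-free string multiplication makes the digit pattern visually transparent and matches how the lemma is applied later (reading off the base-$(p+1)$ digits of $2p\cdot 1(k+1)_b$), but it silently relies on the absence of carries; your algebraic version eliminates all carry bookkeeping and is fully self-contained, at the mild cost of hiding why the digits come out as they do. Your closing observation that the string in (2) is the reversal of the string in (1), in the spirit of Lemma \ref{lem:12}, is also correct, though presenting the two parallel computations, as you chose to, is just as economical.
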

\begin{proof}
Note that $2n = (1,n-1)_{n+1}$ and $n^2 = (n-1,1)_{n+1}$.  Using this, we find 
\begin{enumerate}
    \item
    \begin{align*}
    2n\cdot 1(k+1)_b 
        \ &=\ (1,n-1)_{n+1}\cdot (\underbrace{1,\cdots,1}_{k+1})_{n+1} \\ 
        &=\ (1,\underbrace{n-1+1, n-1+1,\dots,n-1+1}_k,n-1)_{n+1} \\
        &=\ (1,\underbrace{n,\dots,n}_k,n-1)_{n+1},
    \end{align*}
    \item
    \begin{align*}
    n^2 \cdot 1(k+1)_b
        \ &=\ (n-1,1)_{n+1}\cdot (\underbrace{1,\cdots,1}_{k+1})_{n+1} \\ 
        &=\ (n-1,\underbrace{1+n-1,\dots,1+n-1}_k,1)_{n+1} \\
        &=\ (n-1,\underbrace{n,\dots,n}_k,1)_{n+1}.
    \end{align*}
\end{enumerate}
\end{proof}

We use this to prove the following theorem.
\begin{theorem}
Let $p$ be an odd prime, $k \in \mathbb{N}$, $p \nmid k+1$, and $b=p+1$. Then $2p \cdot 1(k+1)_b\in \mathbb{V}_{p+1}$.
\end{theorem}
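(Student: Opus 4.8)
The plan is to mirror the structure of the two preceding theorems. First I would invoke Lemma~\ref{lem:lngshort} with $n=p$ (legitimate since $p\geq 3\geq 2$) to read off the base $p+1$ representations of the number in question and of its reversal. Part (1) of the lemma gives
\begin{equation*}
2p\cdot 1(k+1)_b = \bigl(1,\underbrace{p,\ldots,p}_{k},p-1\bigr)_{p+1},
\end{equation*}
and reversing these digits produces $\bigl(p-1,\underbrace{p,\ldots,p}_{k},1\bigr)_{p+1}$, which by part (2) equals $p^2\cdot 1(k+1)_b$. Hence $r_{p+1}\bigl(2p\cdot 1(k+1)_b\bigr) = p^2\cdot 1(k+1)_b$.

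With the reversal identified, conditions (i) and (ii) in the definition of a $v$-palindrome are immediate. The least significant base $p+1$ digit of $2p\cdot 1(k+1)_b$ is $p-1\neq 0$, so $p+1\nmid 2p\cdot 1(k+1)_b$; and since $p$ is odd we have $2p\neq p^2$, whence $2p\cdot 1(k+1)_b \neq p^2\cdot 1(k+1)_b$, giving (ii).

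The heart of the argument is condition (iii). Writing $R := 1(k+1)_b = \sum_{i=0}^{k}(p+1)^i$, I would show $R$ is coprime to both $2$ and $p$. Since $p$ is odd, $p+1$ is even, so every term $(p+1)^i$ with $i\geq 1$ is even and $R\equiv 1 \pmod 2$; thus $R$ is odd. Reducing modulo $p$ by the same computation as in Equation~\eqref{11} (using $(p+1)\equiv 1\pmod p$), one gets $R\equiv \sum_{i=0}^{k}1 = k+1\pmod p$, so the hypothesis $p\nmid k+1$ forces $p\nmid R$. Consequently $R$ is coprime to $2p$ and to $p^2$, and the additivity of $v$ together with Lemma~\ref{lem:11} yields
\begin{equation*}
v\bigl(2p\cdot R\bigr) = v(2p)+v(R) = v(p^2)+v(R) = v\bigl(p^2\cdot R\bigr) = v\bigl(r_{p+1}(2p\cdot 1(k+1)_b)\bigr),
\end{equation*}
which is exactly condition (iii), completing the proof that $2p\cdot 1(k+1)_b\in\mathbb{V}_{p+1}$.

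There is no serious obstacle here; the only place the hypothesis $p\nmid k+1$ is used is in guaranteeing $p\nmid R$, which is precisely what makes the clean additive splitting valid. The one point requiring a little care is confirming that $R$ is coprime to $2p$ and to $p^2$ rather than merely to $p$, which is why the parity observation (that $R$ is odd) must accompany the computation modulo $p$.
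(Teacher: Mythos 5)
Your proposal is correct and follows essentially the same route as the paper: identify the reversal via Lemma \ref{lem:lngshort}, verify conditions (i) and (ii) from the digit representations, reduce $1(k+1)_b$ modulo $p$ (and check its parity) to get coprimality with $2p$ and $p^2$, and conclude with additivity of $v$ and Lemma \ref{lem:11}. The only cosmetic difference is that you deduce $p+1\nmid 2p\cdot 1(k+1)_b$ from the nonzero last digit $p-1$, while the paper argues $p+1\nmid 2\cdot 1(k+1)_b$ directly; both are sound.
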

\begin{proof}
We find that 
\begin{align*}
    (1, \underbrace{p,\dots,p}_{k}, p - 1)_{p+1}\  &\neq\  r_{p+1}((1, \underbrace{p,\dots,p}_{k}, p - 1)_{p+1})\\
    &=\  (p-1, \underbrace{p,\dots,p}_{k}, 1)_{p+1}
\end{align*}

(this also shows $r_{p+1}(2p \cdot 1(k+1)_{p+1}) = p^2 \cdot 1(k+1)_{p+1}$).  We have
\begin{equation}
    \label{eq:111a}
    1(k+1)_b\  =\  \sum ^{k} _{i=0} (p+1)^i.
\end{equation}
From this, we find
\begin{equation}
    \label{111b}
    1(k+1)_b\  \equiv\  \sum ^{k} _{i=0} (p+1)^i\  \equiv\  \sum ^{k} _{i=0} (1)^i\  \equiv\  k+1 \mod{p}.
\end{equation}
Since $p \nmid k+1$ we know $p \nmid 1(k+1)_b$.  Additionally, we know $2 \nmid  1(k+1)_b$. Further, we find $p+1 \nmid 2 \cdot 1(k+1)_b$ so $p+1 \nmid 2p \cdot 1(k+1)_b$.
Finally we show the numbers are $v$-palindromes by using the additivity of $v$, Lemma \ref{lem:11}, and Equation \eqref{eq:11}. We find that
\begin{align*}
    v((1, \underbrace{p,\dots,p}_{k}, p - 1)_{p+1})
        \ &=\  v\left(2p \cdot 1(k+1)_b\right) \\
        &=\  v(2p) + v(1(k+1)_b) \\
        &=\  v(p^2) + v(1(k+1)_b) \\
        &=\  v(p^2 \cdot 1(k+1)_b) \\
        &=\  v((p-1, \underbrace{p,\dots,p}_{k}, 1)_{p+1}).
\end{align*}
This shows that $2p \cdot 1(k+1)_b\in \mathbb{V}_{p+1}$.
\end{proof}
\subsection{\texorpdfstring{$v$}{v}-palindromes in base \texorpdfstring{$p^2+1$}{p\string^2 + 1}}\label{sec:vpalpp1} 
Recall \cite[Theorem 3]{tsai3}, which applies to a base $b = 3^2 +1$.  We begin by generalizing this Theorem to all bases one greater then an odd prime squared.  We set base $b = p^2 + 1$ as our base, keeping $p$ as an odd prime for the remainder of this section.

\begin{theorem}
\label{thm:genrl3}
Let $p$ be an odd prime.  If $\rho$ is a palindrome in base $b = p^2 +1$ consisting entirely of the digits $0$ and $1$, then $2p^2\rho \in \mathbb{V}_{p^2+1}$.
\end{theorem}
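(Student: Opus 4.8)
The plan is to mirror the strategy used for Theorem \ref{thm1}, exploiting the two-digit base-$b$ shapes of $2p^2$ and its reversal. First I would record the relevant representations: since $b=p^2+1$ we have $2p^2 = 2b-2 = (1,\,p^2-1)_b$, and therefore
\begin{equation*}
r_b(2p^2) = (p^2-1,\,1)_b = (p^2-1)(p^2+1)+1 = p^4.
\end{equation*}
A direct computation in the spirit of Lemma \ref{lem:11} gives $v(2p^2)=v(2)+v(p^2)=2+(p+2)=p+4=v(p^4)$, so the two ``endpoints'' $2p^2$ and $p^4=r_b(2p^2)$ already have equal $v$-value.

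The key step is to show $r_b(2p^2\rho)=p^4\rho$. Writing $\rho=\sum_{i=0}^{L-1}c_i b^i$ with each $c_i\in\{0,1\}$ and $c_{L-1}=1$, I would expand $2p^2\rho$ and $p^4\rho$ digit by digit. Because $2p^2=(1,\,p^2-1)_b$ deposits the digits $1$ and $p^2-1=b-2$ in adjacent positions, the contribution to position $j$ of $2p^2\rho$ is $c_{j-1}+(b-2)c_j$, which lies in $\{0,1,b-2,b-1\}$ and hence never reaches $b$: the multiplication is \emph{carry-free}. Thus $2p^2\rho$ has explicit base-$b$ digits $d_j=c_{j-1}+(b-2)c_j$, and likewise $p^4\rho$ has digits $d'_j=(b-2)c_{j-1}+c_j$, for $0\le j\le L$ (with $c_{-1}=c_L=0$). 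The palindrome relation $c_i=c_{L-1-i}$ then yields $c_{L-j-1}=c_j$ and $c_{L-j}=c_{j-1}$, so $d_{L-j}=d'_j$; reversing the digit string of $2p^2\rho$ reproduces exactly that of $p^4\rho$. I expect this carry-free bookkeeping, together with the index shift forced by the palindrome symmetry, to be the main obstacle.

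For condition (iii) I would establish $v(2p^2\rho)=v(p^4\rho)$. The subtlety here---the point most likely to trip one up---is that $\rho$ need \emph{not} be coprime to $2p^2$: since $b\equiv 1\pmod p$, the number $\rho$ is congruent mod $p$ to the number of its $1$-digits, which may be a multiple of $p$. I would first note that $\rho$ is odd (because $b$ is even and the last digit $c_0=c_{L-1}=1$), then write $\rho=p^a m$ with $p\nmid m$ and $m$ odd, so that $\gcd(2p^2,m)=\gcd(p^4,m)=1$. Additivity of $v$ then reduces everything to the identity $v(2p^{a+2})=2+(p+a+2)=p+a+4=v(p^{a+4})$; adding $v(m)$ to each side gives $v(2p^2\rho)=v(p^4\rho)$. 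In other words, the equality $v(2p^2)=v(p^4)$ survives multiplication by an arbitrary power of $p$, which is precisely what allows $\rho$ to share the factor $p$.

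Finally I would dispatch conditions (i) and (ii). For (i), the last base-$b$ digit of $2p^2\rho$ is $d_0=(b-2)c_0=p^2-1\neq 0$, so $b\nmid 2p^2\rho$. For (ii), since $2p^2\neq p^4$ (as $2\neq p^2$) and $\rho\geq 1$, we have $2p^2\rho\neq p^4\rho=r_b(2p^2\rho)$. Combined with (iii), this shows $2p^2\rho\in\mathbb{V}_{p^2+1}$.
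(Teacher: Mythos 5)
Your proposal is correct and follows essentially the same route as the paper: both establish the digit strings of $2p^2\rho$ and $p^4\rho$ explicitly (your carry-free convolution $d_j = c_{j-1}+(b-2)c_j$ is a cleaner indexed rendering of the paper's block-by-block expansion of runs of $0$s and $1$s), read off conditions (i) and (ii) from those digits, and handle condition (iii) by the same decomposition $\rho = p^{\alpha}n$ with $p\nmid n$ to absorb a possible common factor of $p$. Your treatment of the digit reversal via $d_{L-j}=d'_j$ is in fact somewhat more rigorous than the paper's ``from this we clearly see,'' but the underlying argument is identical.
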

\begin{proof}
We begin by noting that $b\nmid \rho$, since if $\rho$ has a last digit of $0$ then it has a leading digit of $0$, however this means $\rho$ is not a palindrome as any leading digits of $0$ are ignored making $\rho$ of the form $1,\dots,0$. Thus we know the last digit of $\rho$ is $1$.  Further, since we know $p$ is odd, $p+1$ is even thus any number with last digit $1$ is odd, so we know $2\nmid \rho$.

When read from left to right, $\rho$ must be formed by $a_1$ ones, followed by $a_2$ zeros, followed by $a_3$ ones, and so on until lastly, $a_{2r-1}$ ones, where $r,a_1,a_2,\dots,a_{2r-1} \in \mathbb{N}$ such that $ a_i = a_{2r-i}$ for integers $i \in [1,2r-1]$.  Writing out $\rho$ we get

\[\rho = \underbrace{1\dots 1}_{a_1}\overbrace{0\dots 0}^{a_2}\underbrace{1\dots 1}_{a_3}\dots \underbrace{1\dots 1}_{a_3}\overbrace{0\dots 0}^{a_2}\underbrace{1\dots1}_{a_1}.\]
Using the equalities $2p^2 = (1, p^2-1)_{p^2+1}$ and $p^4 = (p^2-1,1)_{p^2+1}$ we find

\begin{multline*}
2p^2\rho = \\ \medmath{(1,\underbrace{p^2,\dots,p^2}_{a_1-1},q,\overbrace{0,\dots,0}^{a_2-1},1,\underbrace{p^2,\dots,p^2}_{a_3-1},q, \dots,1, \underbrace{p^2,\dots,p^2}_{a_3-1} ,q,\overbrace{0,\dots,0}^{a_2-1},1,\underbrace{p^2,\dots,p^2}_{a_1-1},q)_{p^2+1}}
\end{multline*}
and
\begin{multline*}
p^4\rho = \\ \medmath{(q,\underbrace{p^2,\dots,p^2}_{a_1-1},1,\overbrace{0,\dots,0}^{a_2-1},q,\underbrace{p^2,\dots,p^2}_{a_3-1},1, \dots,q, \underbrace{p^2,\dots,p^2}_{a_3-1} ,1,\overbrace{0,\dots,0}^{a_2-1},q,\underbrace{p^2,\dots,p^2}_{a_1-1},1)_{p^2+1}}
\end{multline*}
\noindent where $q=p^2-1$ has been substituted to save space.  From this we clearly see that $p^2+1\nmid 2p^2\rho$ and that $p^4\rho = r_{p^2+1}(2p^2 \rho) \neq 2p^2 \rho$.  Let $\alpha \geq 0$ and $n \geq 1$ be integers such that $\rho = p^\alpha n$ and $(p,n) = 1$. Then

\begin{align*}
    v(2p^2\rho) \ 
        &=\ v(2p^2 \cdot p^\alpha n)\\
        &=\ v(2p^{2+\alpha}n)\\
        &=\ 2+p+2+\alpha+v(n)\\
        &=\ v(p^{4+\alpha}n)\\
        &=\ v(p^4 \cdot p^\alpha n)\\
        &=\ v(r_{p^2+1}(2p^2\rho)).
\end{align*}
This shows that $2p^2 \rho \in \mathbb{V}_{p^2+1}$.

\end{proof}

Next we prove three Corollaries to Theorem \ref{thm:genrl3} that mirror the three theorems proved in Subsection \ref{sec:vpalp1}.

\begin{corollary}
Let $p$ be an odd  prime. Then $2p^2 \in \mathbb{V}_{p^2+1}$.
\end{corollary}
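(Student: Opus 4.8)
The plan is to obtain this as the simplest special case of Theorem \ref{thm:genrl3}. The single-digit number $\rho = 1$ is trivially a palindrome in base $b = p^2+1$ consisting entirely of the digits $0$ and $1$, so taking $\rho = 1$ in Theorem \ref{thm:genrl3} gives $2p^2\cdot 1 = 2p^2 \in \mathbb{V}_{p^2+1}$ at once. The only thing to say is that $1$ qualifies as an admissible $\rho$, after which the corollary follows with no further work; this parallels how the families of repeated concatenations specialize to their base cases.

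For a self-contained verification (mirroring the proof of Theorem \ref{thm1}), I would first write $2p^2$ in base $p^2+1$. Since $2p^2 = (p^2+1) + (p^2-1)$ with $0 < p^2-1 < p^2+1$, we have $2p^2 = (1, p^2-1)_{p^2+1}$, and reversing the two digits gives
\[
r_{p^2+1}(2p^2) = (p^2-1, 1)_{p^2+1} = (p^2-1)(p^2+1) + 1 = p^4.
\]
Then I would check the three defining conditions: from the two-digit representation $(1, p^2-1)_{p^2+1}$ with nonzero last digit $p^2-1$ (here $p\geq 3$ forces $p^2-1\geq 8>0$), condition (i) $p^2+1 \nmid 2p^2$ is immediate; condition (ii) $2p^2 \neq p^4$ holds since equality would force $p^2 = 2$, impossible for an odd prime. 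For condition (iii), using the additivity of $v$ (in the spirit of Lemma \ref{lem:11}), I would compute $v(2p^2) = v(2) + v(p^2) = 2 + (p+2) = p+4 = v(p^4) = v(r_{p^2+1}(2p^2))$.

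I do not expect any genuine obstacle here: the statement is exactly the $\rho = 1$ boundary case of the machinery already established in Theorem \ref{thm:genrl3}, and every computation is routine. The only point worth flagging explicitly is that the single digit $1$ counts as a valid palindrome over the digits $\{0,1\}$, which is what legitimizes the one-line invocation of the theorem.
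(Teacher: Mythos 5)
Your proposal is correct and matches the paper's proof exactly: the paper likewise writes $2p^2 = 2p^2\cdot 1$ and invokes Theorem \ref{thm:genrl3} with the single-digit palindrome $\rho = 1$. Your additional self-contained verification (computing $r_{p^2+1}(2p^2)=p^4$ and $v(2p^2)=p+4=v(p^4)$) is a correct bonus but not part of the paper's argument.
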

\begin{proof}
Note that $2p^2 = 2p^2 \cdot 1$.  Since $1$ is a palindrome consisting only of the digit $1$, by Theorem \ref{thm:genrl3}, we have $2p^2 \in \mathbb{V}_{p^2+1}$.
\end{proof}

\begin{corollary}
Let $p$ be an odd prime and $k \in \mathbb{N}$. Then $(2p^2) (k)_{p^2+1} \in \mathbb{V}_{p^2+1}$.
\end{corollary}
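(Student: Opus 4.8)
The plan is to recognize the repeated concatenation $(2p^2)(k)_{p^2+1}$ as $2p^2\rho$ for an explicit palindrome $\rho$ built only from the digits $0$ and $1$, so that Theorem \ref{thm:genrl3} applies directly and does essentially all the work. First I would invoke Definition \ref{def:rc}: since $2p^2 = (1,p^2-1)_{p^2+1}$ has exactly $L=2$ digits in base $b=p^2+1$, the repeated concatenation factors as
\begin{equation*}
(2p^2)(k)_{p^2+1} \ =\ 2p^2\cdot\bigl(1+b^2+b^4+\cdots+b^{2(k-1)}\bigr) \ =\ 2p^2\cdot\sum_{i=0}^{k-1}b^{2i}.
\end{equation*}
Setting $\rho=\sum_{i=0}^{k-1}b^{2i}$, the corollary reduces to checking that this $\rho$ satisfies the hypotheses of Theorem \ref{thm:genrl3}.

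Next I would read off the base $b$ representation of $\rho$. Because $\rho=\sum_{i=0}^{k-1}b^{2i}$ places a digit $1$ at each even position $0,2,\ldots,2(k-1)$ and a digit $0$ at every odd position in between, its base $b$ expansion is $(1,0,1,0,\ldots,0,1)_{b}$, consisting of $k$ ones with $k-1$ interleaved zeros. This digit string is symmetric, so $\rho$ is a palindrome in base $b=p^2+1$ consisting entirely of the digits $0$ and $1$, exactly as required.

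With these two observations the conclusion is immediate: applying Theorem \ref{thm:genrl3} to this $\rho$ gives $2p^2\rho\in\mathbb{V}_{p^2+1}$, and since $2p^2\rho=(2p^2)(k)_{p^2+1}$, the corollary follows. (The case $k=1$, with $\rho=1$, recovers the preceding corollary.) I do not expect any real obstacle here, as this is a direct specialization of the general theorem. The only point deserving a moment's care is confirming that the exponent in Definition \ref{def:rc} uses $L=2$, the base $b$ digit length of $2p^2$, so that the geometric factor is the even-spaced sum $\sum_{i=0}^{k-1}b^{2i}$ rather than $\sum_{i=0}^{k-1}b^{i}$; it is precisely this even spacing that makes $\rho$ a palindrome of $0$'s and $1$'s, which is what lets Theorem \ref{thm:genrl3} close the argument.
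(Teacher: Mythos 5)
Your proposal is correct and follows essentially the same route as the paper: the paper likewise writes $(2p^2)(k)_{p^2+1}=2p^2\cdot\rho_k$ with $\rho_k=(1,0,1,0,\dots,0,1)_{p^2+1}$, observes that $\rho_k$ is a palindrome in the digits $0$ and $1$, and applies Theorem \ref{thm:genrl3}. Your version is if anything slightly more careful, since you justify the factorization explicitly from Definition \ref{def:rc} with $L=2$.
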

\begin{proof}
We note that $(2p^2) (k)_{p^2+1} = 2p^2 \cdot \rho_k$, where
\begin{equation}
\rho_k := (\underbrace{1,0,1,0,\dots,0,1,0,1}_{2k-1})_{p^2+1};
\end{equation}
$\rho_k$ is a palindrome consisting entirely of the digits $0$ and $1$. Thus, by Theorem \ref{thm:genrl3} we know $(2p^2)(k)_{p^2+1} \in \mathbb{V}_{p^2+1}$
\end{proof}

\begin{corollary}
Let $p$ be an odd prime, $k \in \mathbb{N}$ and $b=p^2+1$. Then $2p^2 \cdot 1(k+1)_{b} \in \mathbb{V}_{p^2+1}$.
\end{corollary}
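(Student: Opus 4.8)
The plan is to obtain this corollary as an immediate special case of Theorem \ref{thm:genrl3}, exactly as the two preceding corollaries are obtained. By Definition \ref{def:rc}, the factor $1(k+1)_b$ is the base-$b$ repunit $(\underbrace{1,\ldots,1}_{k+1})_{p^2+1} = \sum_{i=0}^{k}(p^2+1)^i$. A block of $k+1$ ones is trivially a palindrome, and it consists entirely of the digits $0$ and $1$, so I would set $\rho := 1(k+1)_b$ and check that it meets the hypotheses of Theorem \ref{thm:genrl3}. Applying that theorem then gives $2p^2\cdot 1(k+1)_b = 2p^2\rho \in \mathbb{V}_{p^2+1}$ directly.

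The one bookkeeping step is to verify that a pure string of ones genuinely falls under the description of $\rho$ used in the proof of Theorem \ref{thm:genrl3}: in the notation there it is the degenerate case $r=1$, $a_1 = k+1$, with no intervening zero-blocks, so the general computation specializes verbatim.

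The point most worth flagging --- and the reason this statement is cleaner than its base-$p+1$ analogue in Subsection \ref{sec:vpalp1} --- is that here no divisibility hypothesis such as $p\nmid k+1$ is needed. I expect the ``obstacle'' to be purely conceptual rather than computational: one must notice that Theorem \ref{thm:genrl3} is already insensitive to whether $p\mid\rho$. Indeed, writing $\rho = p^\alpha n$ with $(p,n)=1$, the theorem compares $v(2p^{2+\alpha}n)$ against $v(p^{4+\alpha}n)$; since $p$ occurs to a power $\geq 2$ on both sides, the exponent contributions differ by exactly $v(2)=2$, which the extra factor of $2$ on the left supplies, so the two $v$-values coincide for every $\alpha$. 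By contrast, the base-$p+1$ version compares $v(2p^{1+a}n)$ with $v(p^{2+a}n)$, which disagree by $1$ once $p\mid\rho$ --- precisely the reason that theorem required $p\nmid k+1$. Hence there is no real obstacle here; the corollary is essentially a one-line consequence of Theorem \ref{thm:genrl3}.
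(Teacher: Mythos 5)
Your proof is correct and takes essentially the same approach as the paper, which likewise notes that $1(k+1)_b$ is a palindrome consisting only of the digit $1$ and invokes Theorem \ref{thm:genrl3} directly. Your side observation is also accurate: since Theorem \ref{thm:genrl3} compares $v(2p^{2+\alpha}n)$ with $v(p^{4+\alpha}n)$ and these agree for every $\alpha\geq 0$, no coprimality hypothesis such as $p\nmid k+1$ is needed here, in contrast with the base-$(p+1)$ theorem where the exponents $1+a$ and $2+a$ cause a discrepancy of $1$ once $p\mid\rho$.
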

\begin{proof}
As $1(k+1)_b$ consists only of the digit $1$, we know it is a palindrome.  Therefore, by Theorem \ref{thm:genrl3} we know $2p^2 \cdot 1(k+1)_{p^2+1} \in \mathbb{V}_{p^2+1}$.

\end{proof}

\section{Existence of \texorpdfstring{$v$}{v}-palindromes for infinitely many bases}\label{sec:main}

In this section we show the existence of $v$-palindromes (and therefore infinitely many $v$-palindromes by Theorem \ref{thm:inf}) for infinitely many bases. Everything is based on the simple fact that $v(5)=v(6)$. Since $v(n)$ is an additive function, for every integer $t\geq1$ with $(t,30)=1$, we have $v(5t)=v(6t)$.

Imagine that we have a base $b\geq2$ for which we would like to show that a $v$-palindrome exists. The first attempt would be to look at two-digit numbers. That is, numbers $(a,c)_b=ab+c$, where $1\leq a<c<b$ are integers. By definition, $(a,c)_b$ is a $v$-palindrome in base $b$ if and only if $v((a,c)_b)=v((c,a)_b)$, or equivalently,
\begin{equation}
  v(ab+c)=v(cb+a).
\end{equation}
This would hold if for some integer $t\geq1$ with $(t,30)=1$,
\begin{equation}\label{eq:system}
\begin{cases}
ab+c=5t,\\
cb+a=6t,
\end{cases}
\end{equation}
simply by the observation in the previous paragraph. To summarize, we have shown the following.
\begin{lemma}
Let $b\geq2$ be an integer. If there exists an ordered triple $(a,c,t)$ of positive integers such that $a<c<b$, $(t,30)=1$, and Equation \eqref{eq:system} holds, then the two-digit number $(a,c)_b$ is a $v$-palindrome in base $b$. Hence, there exists a $v$-palindrome in base $b$.
\end{lemma}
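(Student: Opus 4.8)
The plan is to verify directly the three defining conditions of a $v$-palindrome for the two-digit number $n = (a,c)_b = ab + c$, drawing on the hypotheses $a < c < b$, $(t,30)=1$, and the system \eqref{eq:system}. First I would record that, because $1 \le a < c < b$, the string $(a,c)_b$ is a genuine two-digit base $b$ representation: its leading digit $a$ is nonzero and its units digit $c$ lies in $\{1,\dots,b-1\}$. Consequently $r_b(n) = (c,a)_b = cb + a$, and the system \eqref{eq:system} identifies $n = 5t$ and $r_b(n) = 6t$.

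For condition (i), since $0 < c < b$ we have $n \equiv c \not\equiv 0 \pmod b$, so $b \nmid n$. For condition (ii), I would simply note $n = 5t \ne 6t = r_b(n)$ because $t \ge 1$; alternatively $n - r_b(n) = (a-c)(b-1) \ne 0$ since $a < c$ and $b \ge 2$.

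The substance lies in condition (iii). Here I would invoke the additivity of $v$ together with the identity $v(5) = 5 = 2+3 = v(6)$. Since $(t,30)=1$, the integer $t$ is coprime to both $5$ and $6$, so
\begin{equation*}
v(n) = v(5t) = v(5) + v(t) = 5 + v(t) = v(6) + v(t) = v(6t) = v(r_b(n)),
\end{equation*}
which is exactly condition (iii). With all three conditions in hand, $(a,c)_b \in \mathbb{V}_b$, and its mere existence establishes the concluding ``hence'' clause.

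I do not anticipate a genuine obstacle: the argument is a direct verification and is essentially the computation already sketched in the paragraph preceding the statement. The only point needing a moment's care is confirming that the hypothesis $(t,30)=1$ is precisely what licenses splitting $v$ additively across both $5t$ and $6t$ — coprimality of $t$ to $2$, $3$, and $5$ is exactly what makes $5$ and $6 = 2\cdot 3$ each relatively prime to $t$, so that $v(5t) = v(5)+v(t)$ and $v(6t)=v(6)+v(t)$ both hold simultaneously.
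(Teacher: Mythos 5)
Your proof is correct and follows essentially the same route as the paper, which derives the lemma from the observation that additivity of $v$ and $(t,30)=1$ give $v(5t)=v(6t)$; you are in fact slightly more careful than the paper in explicitly checking conditions (i) and (ii) from the hypothesis $1\le a<c<b$, which the paper leaves implicit.
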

\begin{definition}
We call a triple $(a,c,t)$ in the premise of the above lemma a \emph{permissible triple} for $b$.
\end{definition}
Our strategy is to try to find permissible triples. The system \eqref{eq:system} can be written in matrix from as
\begin{equation}
\begin{pmatrix}
b & 1\\
1 & b
\end{pmatrix}
\begin{pmatrix}
a\\
c
\end{pmatrix}=
t\begin{pmatrix}
5\\
6
\end{pmatrix}.
\end{equation}
Solving this we have
\begin{align}
\begin{pmatrix}
a\\
c
\end{pmatrix}&=t\begin{pmatrix}
b & 1\\
1 & b
\end{pmatrix}^{-1}\begin{pmatrix}
5\\
6
\end{pmatrix}=\frac{t}{b^2-1}
\begin{pmatrix}
b & -1\\
-1 & b
\end{pmatrix}
\begin{pmatrix}
5\\
6
\end{pmatrix}\\
&=\frac{t}{b^2-1}
\begin{pmatrix}
5b-6\\
-5+6b
\end{pmatrix}=
\begin{pmatrix}
\frac{t(5b-6)}{b^2-1}\\
\frac{t(-5+6b)}{b^2-1}
\end{pmatrix}.
\end{align}
We write this separately as
\begin{equation}\label{eq:frac}
  a=\frac{t(5b-6)}{b^2-1},\quad c=\frac{t(6b-5)}{b^2-1},
\end{equation}
from which we also see that $0<a<c$. Hence we have the following lemma.
\begin{lemma}
Let $b\geq2$ be an integer. For every integer $t\geq1$, there exist unique rational numbers $a,c\in\mathbb{Q}$ such that Equation \eqref{eq:system} holds, and they are given by Equation \eqref{eq:frac}. Moreover, $0<a<c$.
\end{lemma}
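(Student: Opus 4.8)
The plan is to treat \eqref{eq:system} as a linear system in the two unknowns $a,c$ over $\mathbb{Q}$, solve it by inverting the $2\times2$ coefficient matrix, and then read off the inequalities $0<a<c$ directly from the resulting closed forms.

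First I would rewrite \eqref{eq:system} as the matrix equation
\[
\begin{pmatrix} b & 1\\ 1 & b\end{pmatrix}\begin{pmatrix} a\\ c\end{pmatrix}=t\begin{pmatrix} 5\\ 6\end{pmatrix}.
\]
The coefficient matrix has determinant $b^2-1$, and since $b\geq2$ we have $b^2-1\geq3>0$; in particular the matrix is invertible over $\mathbb{Q}$. Existence and uniqueness of a rational solution $(a,c)$ are then immediate from invertibility: left-multiplying by the inverse yields the one and only solution. Carrying out the (routine) computation of the inverse and the matrix--vector product reproduces exactly the formulas in \eqref{eq:frac}, which is where the uniqueness claim is pinned down to an explicit expression.

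Next I would establish $0<a<c$ directly from \eqref{eq:frac}. Because $t\geq1$ and $b^2-1>0$, the sign of $a$ equals the sign of $5b-6$, and $5b-6\geq4>0$ for $b\geq2$, giving $a>0$. For the second inequality, since $a$ and $c$ share the positive denominator $b^2-1$ and the positive factor $t$, the comparison $a<c$ reduces to $5b-6<6b-5$, i.e.\ to $b>-1$, which certainly holds. Thus $0<a<c$ as claimed.

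I do not expect any real obstacle here: the statement is a direct consequence of the invertibility of a single fixed $2\times2$ matrix together with two elementary sign checks. The only point requiring genuine attention is the hypothesis $b\geq2$, which is precisely what guarantees that the determinant $b^2-1$ is nonzero (so the solution exists and is unique) and positive (so that the signs governing the inequalities come out as stated); I would flag its use explicitly at each of these two points.
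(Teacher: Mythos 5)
Your proposal is correct and follows essentially the same route as the paper: the paper likewise rewrites \eqref{eq:system} as the matrix equation with coefficient matrix $\left(\begin{smallmatrix} b & 1\\ 1 & b\end{smallmatrix}\right)$, inverts it using the determinant $b^2-1\neq 0$ to obtain the formulas \eqref{eq:frac}, and reads off $0<a<c$ from those expressions. Your explicit sign checks ($5b-6>0$ for $b\geq2$, and $5b-6<6b-5$) merely spell out what the paper leaves implicit, so there is nothing to add.
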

Hence the only possible permissible triples for $b$ are
\begin{equation}
\left(\frac{t(5b-6)}{b^2-1},\frac{t(-5+6b)}{b^2-1},t\right),
\end{equation}
for an integer $t\geq1$ with $(t,30)=1$. The only missing conditions to fulfill are
\begin{align}
\frac{t(5b-6)}{b^2-1},\frac{t(-5+6b)}{b^2-1}&\in\mathbb{Z},\label{eq:integers}\\
\text{and } \frac{t(-5+6b)}{b^2-1}&<b.
\end{align}
We write
\begin{align}\label{eq:glance}
\frac{t(5b-6)}{b^2-1}&=\frac{t(5b-6)/(5b-6,b^2-1)}{(b^2-1)/(5b-6,b^2-1)},\\
\frac{t(-5+6b)}{b^2-1}&=\frac{t(-5+6b)/(-5+6b,b^2-1)}{(b^2-1)/(-5+6b,b^2-1)}.
\end{align}
Hence we see that Equation \eqref{eq:integers} holds if and only if $t$ is a multiple of
\begin{equation}
  f(b)=\left[\frac{b^2-1}{(5b-6,b^2-1)},\frac{b^2-1}{(-5+6b,b^2-1)}\right];
\end{equation}
here we also defined the function $f(b)$ for integers $b\geq2$. Hence we have shown the following lemma.
\begin{lemma}
Let $b\geq2$ be an integer. Then the permissible triples of $b$ are precisely the triples
\begin{equation}
  \left(\frac{t(5b-6)}{b^2-1},\frac{t(-5+6b)}{b^2-1},t\right),
\end{equation}
where
\begin{equation}
t\in S(b)=\left\{t\in\mathbb{N}\colon (t,30)=1,\, f(b)\mid t,\, t<\frac{b(b^2-1)}{-5+6b}\right\};
\end{equation}
where we also defined the set-valued function $S(b)$ for integers $b\geq2$.
\end{lemma}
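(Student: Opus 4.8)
The plan is to assemble this lemma directly from the three preceding lemmas, since it is essentially their synthesis: I would unwind the definition of a permissible triple condition by condition and check that the surviving constraints are exactly the ones cutting out $S(b)$.

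First I would recall that, by definition, $(a,c,t)$ is a permissible triple for $b$ precisely when $a,c,t$ are positive integers satisfying $a<c<b$, $(t,30)=1$, and the system \eqref{eq:system}. The crucial reduction is the uniqueness lemma: for any fixed integer $t\geq1$, the only rationals $a,c$ solving \eqref{eq:system} are the ones in \eqref{eq:frac}, and moreover these automatically satisfy $0<a<c$. Hence every permissible triple must have its $a,c$ coordinates given by \eqref{eq:frac}, and conversely any candidate of this shape already satisfies the positivity of $a$ and $c$ and the strict inequality $a<c$ for free. This is the step that lets me discard those conditions entirely rather than re-verifying them.

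Next I would dispatch the remaining conditions one at a time. The condition $(t,30)=1$ carries over verbatim. For integrality, I invoke the lemma identifying when the expressions in \eqref{eq:frac} lie in $\mathbb{Z}$: both $a$ and $c$ are integers if and only if $f(b)\mid t$. Finally, since $a<c$ holds automatically, the chain $a<c<b$ collapses to the single inequality $c<b$, i.e.\ $\tfrac{t(-5+6b)}{b^2-1}<b$. Clearing denominators (valid since $b^2-1>0$) and then dividing by $-5+6b$ (valid since $-5+6b>0$ for every $b\geq2$, so the inequality direction is preserved) yields exactly $t<\tfrac{b(b^2-1)}{-5+6b}$.

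Collecting these, a triple is permissible if and only if it has the form in \eqref{eq:frac} with $t$ a positive integer satisfying $(t,30)=1$, $f(b)\mid t$, and $t<\tfrac{b(b^2-1)}{-5+6b}$, which is precisely the membership condition $t\in S(b)$. I do not anticipate any genuine obstacle; the only place demanding care is the last inequality manipulation, where one must confirm the sign of $-5+6b$ before dividing so the bound comes out in the correct direction, and where one must remember that the positivity and ordering of $a$ and $c$ are furnished by the uniqueness lemma rather than needing separate verification.
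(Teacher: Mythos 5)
Your proposal is correct and matches the paper's own (largely implicit) derivation: the paper likewise reduces permissibility via the uniqueness lemma (which supplies $0<a<c$ for free), carries over $(t,30)=1$, characterizes integrality of \eqref{eq:frac} as $f(b)\mid t$, and converts $c<b$ into $t<\frac{b(b^2-1)}{-5+6b}$ using $b^2-1>0$ and $-5+6b>0$. No gaps; same approach.
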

However, the above lemma does not promise that permissible triples exist, i.e., $S(b)\neq\varnothing$. However, we can get the following sufficient condition.
\begin{lemma}
Let $b\geq2$ be an integer. If
\begin{equation}
  (f(b),30)=1,\quad f(b)<\frac{b(b^2-1)}{-5+6b},
\end{equation}
then $f(b)\in S(b)$, and consequently there is a permissible triple for $b$.
\end{lemma}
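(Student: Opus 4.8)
The plan is simply to verify that the candidate value $t = f(b)$ satisfies all three conditions defining the set $S(b)$, whence $f(b) \in S(b)$, and then to invoke the preceding lemma to extract a permissible triple. Since the earlier lemmas have already encoded the entire substance of the problem into the definitions of $f(b)$ and $S(b)$, I expect no real obstacle: the argument is a direct membership check.

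First I would confirm that $f(b)$ is a genuine candidate, i.e.\ that $f(b) \in \mathbb{N}$. This holds because $f(b)$ is the least common multiple of the two positive integers $(b^2-1)/(5b-6,b^2-1)$ and $(b^2-1)/(-5+6b,b^2-1)$; each is positive since $b \geq 2$ forces $b^2 - 1 \geq 3 > 0$, and each is an integer because the denominator divides the numerator. Hence $f(b)$ is a positive integer.

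Next I would check the three defining conditions of $S(b)$ with $t = f(b)$. The condition $(t,30)=1$ becomes $(f(b),30)=1$, which is precisely the first hypothesis. The divisibility condition $f(b) \mid t$ becomes $f(b) \mid f(b)$, which is trivially true. The size condition $t < \tfrac{b(b^2-1)}{-5+6b}$ becomes $f(b) < \tfrac{b(b^2-1)}{-5+6b}$, which is precisely the second hypothesis. Thus all three membership requirements are met, so $f(b) \in S(b)$.

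Finally, since $f(b) \in S(b)$, the set $S(b)$ is nonempty, and by the preceding lemma the corresponding triple $\left(\tfrac{f(b)(5b-6)}{b^2-1},\tfrac{f(b)(-5+6b)}{b^2-1},f(b)\right)$ is a permissible triple for $b$. The only point requiring any care is the bookkeeping that the two stated hypotheses line up exactly with the nontrivial membership conditions of $S(b)$, and that the divisibility condition is automatic for $t$ equal to $f(b)$ itself; once this is observed the conclusion is immediate.
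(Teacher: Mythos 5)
Your proof is correct and matches the paper's (implicit) argument exactly: the paper states this lemma without a separate proof precisely because, as you observe, the two hypotheses are verbatim the two nontrivial membership conditions for $S(b)$, while $f(b)\mid f(b)$ is automatic, so the preceding lemma immediately yields the permissible triple. Your added check that $f(b)$ is a positive integer (an lcm of positive integer divisor-quotients of $b^2-1$) is a harmless and sensible piece of bookkeeping.
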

Since $f(b)\mid b^2-1$, if $(b^2-1,30)=1$ then $(f(b),30)=1$. Hence the above lemma can be weakened to the following.
\begin{lemma}
Let $b\geq2$ be an integer. If
\begin{equation}\label{eq:conditions}
  (b^2-1,30)=1,\quad f(b)<\frac{b(b^2-1)}{-5+6b},
\end{equation}
then $f(b)\in S(b)$, and consequently there is a permissible triple for $b$.
\end{lemma}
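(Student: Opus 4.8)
The plan is to obtain this statement as an immediate weakening of the preceding lemma, whose hypotheses are $(f(b),30)=1$ and $f(b)<\frac{b(b^2-1)}{-5+6b}$. Since the second hypothesis is identical to the one assumed here, the only thing needing verification is that the arithmetic assumption $(b^2-1,30)=1$ forces $(f(b),30)=1$; once that is in place, the earlier lemma applies directly and delivers both $f(b)\in S(b)$ and the existence of a permissible triple.

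First I would record the divisibility $f(b)\mid b^2-1$. By definition
\[
f(b)=\left[\frac{b^2-1}{(5b-6,\,b^2-1)},\ \frac{b^2-1}{(-5+6b,\,b^2-1)}\right],
\]
and each argument of the least common multiple is $b^2-1$ divided by one of its own divisors, hence is itself a divisor of $b^2-1$. As the least common multiple of two divisors of an integer $N$ again divides $N$ (the exponent of each prime in the lcm is the maximum of two exponents, each at most its exponent in $N$), we conclude $f(b)\mid b^2-1$.

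Next I would transfer the coprimality condition through this divisibility. If $d\mid b^2-1$, then any common divisor of $d$ and $30$ divides both $b^2-1$ and $30$, hence divides $(b^2-1,30)$; under the hypothesis $(b^2-1,30)=1$ this forces $(d,30)=1$. Taking $d=f(b)$ yields $(f(b),30)=1$, which is exactly the coprimality premise of the preceding lemma.

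Having secured both $(f(b),30)=1$ and $f(b)<\frac{b(b^2-1)}{-5+6b}$, I would simply invoke the preceding lemma to conclude $f(b)\in S(b)$, and therefore that a permissible triple for $b$ exists. I expect no genuine obstacle here: the entire content reduces to the observation $f(b)\mid b^2-1$ together with the elementary inheritance of coprimality to a fixed modulus. The only point requiring a moment's care is the remark that an lcm of divisors of $b^2-1$ is again a divisor of $b^2-1$, which is immediate from unique factorization.
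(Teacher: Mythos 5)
Your proposal is correct and matches the paper's own argument exactly: the paper also deduces $(f(b),30)=1$ from the divisibility $f(b)\mid b^2-1$ together with the hypothesis $(b^2-1,30)=1$, and then invokes the preceding lemma. Your extra verification that an lcm of divisors of $b^2-1$ again divides $b^2-1$ is a sound elaboration of a step the paper leaves implicit.
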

We now consider the condition $(b^2-1,30)=1$. It is easily shown that this is equivalent to having both $b\equiv \modd{0} {6}$ and $b\equiv \modd{0,2,3} {5}$. In particular, $b\equiv \modd{0} {30}$ is a sufficient condition. Suppose that $k\geq1$ is an integer, then
\begin{align}
f(30k)&=\left[\frac{(30k)^2-1}{(5(30k)-6,(30k)^2-1)},\frac{(30k)^2-1}{(-5+6(30k),(30k)^2-1)}\right]\\
&=\left[\frac{(30k)^2-1}{(6k-2,11)},\frac{(30k)^2-1}{(5k+2,11)}\right],
\end{align}
where for the second equality we used a property of the greatest common divisor function to simplify. Because of the right inequality in Equation \eqref{eq:conditions}, we want $f(30k)$ to be small. Thus it might be good if we have $(6k-2,11)=(5k+2,11)=11$, which is easily shown to be equivalent to $k\equiv \modd{4} {11}$. If we assume that $k\equiv \modd{4} {11}$, then
\begin{equation}
  f(30k)=\frac{(30k)^2-1}{11}.
\end{equation}
On the other hand, the right-hand-side of the right inequality \eqref{eq:conditions} becomes
\begin{equation}
\frac{(30k)((30k)^2-1)}{-5+6(30k)}.
\end{equation}
That $f(30k)$ is strictly less than the above quantity is equivalent to
\begin{equation}
  -5+6(30k)<11(30k),
\end{equation}
which always holds. Hence the above lemma can be further weakened to the following.
\begin{theorem}
Let $k\equiv \modd{4} {11}$ be a positive integer, then
\begin{equation}
  \left(\frac{-6+150k}{11},\frac{-5+180k}{11},\frac{-1+900k^2}{11}\right)
\end{equation}
is a permissible triple for the base $30k$. In particular, the two-digit number
\begin{equation}
\left(\frac{-6+150k}{11},\frac{-5+180k}{11}\right)_{30k}
\end{equation}
is a $v$-palindrome in base $30k$.
\end{theorem}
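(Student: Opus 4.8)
The plan is to apply the last sufficient-condition lemma, the one whose hypotheses are the two conditions in Equation \eqref{eq:conditions}, to the base $b=30k$. Essentially all of the required computation has already been assembled in the discussion preceding the statement, so the proof will amount to verifying those two hypotheses for $b=30k$ under the assumption $k\equiv 4\pmod{11}$, and then substituting the resulting value $t=f(b)$ into the explicit formulas \eqref{eq:frac} for $a$ and $c$.

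First I would set $b=30k$ and check that $(b^2-1,30)=1$. Since $30\mid b$ we have $b\equiv 0\pmod{30}$, which was already noted to be a sufficient condition for $(b^2-1,30)=1$; this settles the first hypothesis in \eqref{eq:conditions}, and because $f(b)\mid b^2-1$ it also forces $(f(b),30)=1$. Next I would evaluate $f(30k)$ using the gcd simplification carried out just before the statement, namely $f(30k)=\left[\frac{(30k)^2-1}{(6k-2,11)},\frac{(30k)^2-1}{(5k+2,11)}\right]$. The hypothesis $k\equiv 4\pmod{11}$ is precisely what makes $11\mid 6k-2$ and $11\mid 5k+2$ (both reduce to $22\pmod{11}$ when $k\equiv 4$), and since $11$ is prime each gcd equals $11$; hence $f(30k)=\frac{(30k)^2-1}{11}=\frac{900k^2-1}{11}$, which is exactly the claimed third coordinate.

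To verify the second hypothesis of \eqref{eq:conditions}, I would show $f(30k)<\frac{b(b^2-1)}{-5+6b}$. Clearing the common positive factor $b^2-1$ from both sides, this reduces to the linear inequality $-5+6(30k)<11(30k)$, i.e. $180k-5<330k$, which holds for every positive integer $k$. By the lemma, $f(30k)\in S(30k)$, so $t=f(30k)$ produces a permissible triple. Finally I would substitute $t=\frac{b^2-1}{11}$ into \eqref{eq:frac}: the factor $b^2-1$ cancels, yielding $a=\frac{5b-6}{11}=\frac{-6+150k}{11}$ and $c=\frac{6b-5}{11}=\frac{-5+180k}{11}$, which matches the stated triple. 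The required inequalities $a<c<b$ follow from the earlier lemma's conclusion $0<a<c$ together with the upper bound $t<\frac{b(b^2-1)}{-5+6b}$ built into membership in $S(30k)$, and $(t,30)=1$ is likewise part of that membership.

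I do not expect a genuine obstacle, since the substantive content has already been established upstream: the structural description of permissible triples in terms of $S(b)$, and the gcd reduction giving the clean value of $f(30k)$. The only points demanding care are confirming that $k\equiv 4\pmod{11}$ is exactly the congruence collapsing both gcds to $11$, and checking that the cancellation of $b^2-1$ really does turn the abstract formulas \eqref{eq:frac} into the explicit linear-in-$k$ expressions claimed; both are short verifications rather than difficulties.
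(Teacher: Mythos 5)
Your proposal is correct and takes essentially the same route as the paper, which states this theorem as the direct culmination of the preceding lemmas: verify $(b^2-1,30)=1$ via $b=30k\equiv 0\pmod{30}$, collapse both gcds to $11$ using $k\equiv 4\pmod{11}$ so that $f(30k)=\frac{900k^2-1}{11}$, reduce the inequality in \eqref{eq:conditions} to $-5+6(30k)<11(30k)$, and substitute $t=f(30k)$ into \eqref{eq:frac}. Your supporting checks (the cancellation of $b^2-1$, the bound $c<b$ and coprimality $(t,30)=1$ coming from membership in $S(30k)$) are exactly the steps the paper relies on.
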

Hence we have proved the existence of $v$-palindromes for infinitely many bases, summarized as follows.
\begin{corollary}\label{cor:infb}
  If $b\equiv \modd{120} {330}$ is a positive integer, then there exists a $v$-palindrome in base $b$.
\end{corollary}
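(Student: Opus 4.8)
The plan is to recognize that the hypothesis $b\equiv \modd{120} {330}$ is nothing more than a repackaging of the hypothesis of the theorem immediately preceding this corollary, so that the statement follows by a single change of variable. The key arithmetic observation is that $330 = 30\cdot 11$ and $120 = 30\cdot 4$; thus the one congruence $b\equiv \modd{120} {330}$ simultaneously encodes the divisibility $30\mid b$ and the correct residue of $b/30$ modulo $11$, which are exactly the ingredients the preceding theorem needs.

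Concretely, I would first write $b = 330m + 120$ for some integer $m\geq 0$, which is possible precisely because $b$ is a positive integer satisfying $b\equiv \modd{120} {330}$ (the smallest such value being $b=120$). Factoring out $30$ gives $b = 30(11m+4)$, so I set $k = 11m+4$. Then $b = 30k$, and reducing modulo $11$ yields $k\equiv \modd{4} {11}$; moreover $k = 11m+4\geq 4\geq 1$, so $k$ is a positive integer, as required to invoke the theorem. Applying that theorem to this $k$, the two-digit number $\left(\frac{-6+150k}{11},\frac{-5+180k}{11}\right)_{30k}$ is a $v$-palindrome in base $30k = b$, which exhibits the desired $v$-palindrome in base $b$.

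I expect no genuine obstacle here: the entire content is the elementary equivalence $b\equiv \modd{120} {330}\iff b = 30k$ with $k\equiv \modd{4} {11}$, verified by the factorization above, after which the theorem is quoted verbatim. The only point meriting a word of care is checking that $k$ lands in the admissible range (positivity of $k$), which is immediate. If one wished to strengthen the conclusion to \emph{infinitely many} $v$-palindromes in each such base, one would simply feed each of the $v$-palindromes produced here into Theorem \ref{thm:inf}.
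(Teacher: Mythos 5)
Your proposal is correct and matches the paper, which treats the corollary as an immediate consequence of the preceding theorem via exactly the change of variables you describe: $b\equiv \modd{120} {330}$ iff $b=30k$ with $k\equiv \modd{4} {11}$, since $330m+120=30(11m+4)$. Your extra check that $k=11m+4\geq 1$ and your remark about combining with Theorem \ref{thm:inf} to get infinitely many $v$-palindromes per base are both consistent with what the paper does.
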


In particular there is a positive density of bases $b\geq2$ for which a $v$-palindrome exists.

\section{Further problems}\label{sec:further}
In this section we describe some directions for further investigation.

\subsection{Three conjectures}\label{sub:short}
In the short note \cite{tsai18}, three conjectures on $v$-palindromes in base ten have been proposed by commentators and we restate them as follows.

\begin{conjecture}[{\cite[(a)]{tsai18}}]\label{conj:1}
There does not exist a prime $v$-palindrome in base ten.
\end{conjecture}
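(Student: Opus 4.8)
The plan is to reduce the conjecture to an elementary but deep primality statement via the characterization of Boran et al.\ \cite{polymath} recalled in the introduction: a prime is a $v$-palindrome in base ten if and only if it has the form $5\cdot 10^m-1$ with $5\cdot 10^m-3$ also prime. Thus Conjecture~\ref{conj:1} is equivalent to the assertion that for every integer $m\geq 1$ at least one of $N_1(m)=5\cdot 10^m-1$ and $N_2(m)=5\cdot 10^m-3=N_1(m)-2$ is composite. Since the two numbers differ by $2$, we are being asked to rule out a twin-prime-like pattern, and a direct check of small $m$ is consistent with the conjecture: one member always acquires a small prime factor (for example $7\mid N_1(1)=49$ and $7\mid N_2(2)=497$).

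First I would attempt the standard covering-congruence attack: find a finite set of primes and a modulus $M$ so that, for each residue class of $m$ modulo $M$, some prime in the set is a proper divisor of $N_1(m)$ or $N_2(m)$. Organizing this by the multiplicative order $d=\mathrm{ord}_p(10)$, a prime $p$ covers the class $m\equiv a\pmod{d}$ through $N_1$ exactly when $5\cdot 10^{a}\equiv 1\pmod p$, and through $N_2$ exactly when $5\cdot 10^{a}\equiv 3\pmod p$. For instance $p=7$ (order $6$) covers $m\equiv 1\pmod 6$ through $N_1$ and $m\equiv 2\pmod 6$ through $N_2$, which already disposes of two of the six residues modulo $6$.

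The hard part, and what I expect to be the fatal obstacle for this method, is the residue class $m\equiv 0$. In any covering modulo $M$, the class $m\equiv 0\pmod M$ must be covered by a single odd prime $p\neq 5$; comparing $m=M$ and $m=2M$ forces $\mathrm{ord}_p(10)\mid M$, and then covering through $N_1$ requires $5\equiv 1\pmod p$ while covering through $N_2$ requires $5\equiv 3\pmod p$, i.e.\ $p\mid 4$ or $p\mid 2$. Since $N_1(m)$ and $N_2(m)$ are odd for $m\geq 1$, no such prime exists, and refining the modulus merely reproduces the same uncoverable all-zeros class. Hence no covering system of this type can succeed. Moreover neither $5\cdot 10^m-1$ nor $5\cdot 10^m-3$ admits a cyclotomic or Aurifeuillian factorization, so no purely algebraic route to forced compositeness is available either.

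What remains is therefore a genuine primality-avoidance statement of twin-prime type, beyond current technology, so a proof would require essentially new analytic input. On the heuristic side, the Hardy--Littlewood and Bateman--Horn philosophy applied to the admissible pair $\bigl(N_2(m),N_1(m)\bigr)$ predicts that the expected number of $m\leq X$ with both members prime is of order $\sum_{m\leq X}1/\bigl(\log(5\cdot 10^m)\bigr)^2$, a convergent series; this suggests that only finitely many (conjecturally zero) such $m$ occur. I would record the covering-impossibility observation above as the concrete deliverable, explaining precisely why congruence and algebraic obstructions cannot settle the question, and present Conjecture~\ref{conj:1} as open, since proving that \emph{no} admissible $m$ exists for so sparse a sequence lies outside the reach of present methods.
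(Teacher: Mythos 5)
You have not proven the statement, but neither does the paper: this is Conjecture \ref{conj:1}, stated as open, and the paper explicitly remarks that the characterization of Boran et al.\ \cite{polymath} ``does not prove nor disprove'' it. Your assessment therefore matches the paper's own status for this statement, and your reduction --- that the conjecture is equivalent to the nonexistence of $m\geq 1$ with both $5\cdot 10^m-1$ and $5\cdot 10^m-3$ prime --- is exactly the observation the paper draws from \cite{polymath} when it notes that any prime $v$-palindrome in base ten must be a twin prime. What you add beyond the paper is the covering-congruence impossibility argument, which is sound and worth recording; one small technical refinement: rather than assuming the class $m\equiv 0\pmod M$ is covered by a single prime-form pair and comparing $m=M$ with $m=2M$, it is cleaner to let $L=\lcm_p \operatorname{ord}_p(10)$ over the finite prime set and note that for every $m\equiv 0\pmod L$ one has $5\cdot 10^m\equiv 5\pmod p$ for all $p$ in the set, so covering through $N_1$ would force $p\mid 4$ and through $N_2$ would force $p\mid 2$, whence no prime in the set divides either number for such $m$. (Your pigeonhole sketch can be repaired to this, but the lcm formulation avoids the issue that a non-uniform covering need not assign one pair to the whole class.) Your verifications for $p=7$ are correct ($7\mid 5\cdot 10^m-1$ for $m\equiv 1\pmod 6$ and $7\mid 5\cdot 10^m-3$ for $m\equiv 2\pmod 6$), and the Hardy--Littlewood/Bateman--Horn heuristic giving a convergent expected count $\sum_m \bigl(\log(5\cdot 10^m)\bigr)^{-2}$ is the standard and appropriate evidence. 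In short: you correctly conclude the conjecture remains open, and your deliverable --- that congruence and algebraic obstructions provably cannot settle it --- is a genuine, correct supplement to what the paper says, not a gap.
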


\begin{conjecture}[{\cite[(b)]{tsai18}}]\label{conj:2}
There are infinitely many $v$-palindromes $n$ in base ten such that both $n$ and $r_{10}(n)$ are square-free.
\end{conjecture}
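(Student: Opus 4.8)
The plan is to reduce the construction to a digit-reversal problem by exploiting the additivity of $v$, and then to handle square-freeness separately. The key observation is that if $n = ut$ and $r_{10}(n) = wt$, where $u,w$ are fixed square-free integers with $v(u)=v(w)$ and $\gcd(t,uw)=1$, then by additivity $v(n)=v(u)+v(t)=v(w)+v(t)=v(r_{10}(n))$ automatically, so $n$ is a $v$-palindrome as soon as $10\nmid n$ and $n\neq r_{10}(n)$. If moreover $t$ is square-free and coprime to $uw$, then both $n=ut$ and $r_{10}(n)=wt$ are square-free. Thus it suffices to fix such a seed pair $(u,w)$ and produce infinitely many square-free $t$, coprime to $uw$, with $r_{10}(ut)=wt$. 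Note that the known families do not help here: for $18\rho$ of \cite{tsai3} one has $18=2\cdot 3^2$, so $3^2\mid n$ forces $n$ to be non-square-free, and likewise for the families $18,198,1998,\dots$ and $18,1818,\dots$. A genuinely new construction is therefore required.

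One necessary constraint must be respected from the start: digit reversal preserves the digit sum, so $n\equiv r_{10}(n)\pmod 9$, i.e.\ $(u-w)t\equiv 0\pmod 9$. For $t$ coprime to $3$ this forces $u\equiv w\pmod 9$, which rules out the most tempting seed $v(5)=v(6)=5$, since $r_{10}(5t)=6t$ would force $3\mid t$ (as $3\mid 6t$ and reversal preserves the digit sum) and thereby destroy the additive split. A compatible seed is $u=39$, $w=66$: both are square-free, $v(39)=3+13=16=2+3+11=v(66)$, and $39\equiv 66\equiv 3\pmod 9$. First I would search for an infinite ``insertion'' family realizing the reversal relation $r_{10}(39\,t)=66\,t$, in the spirit of the Klosinski--Smolarski families \cite{klosinski} for $r_{10}(n)=kn$, where digits are inserted in the middle in a balanced way; if no such family exists for this pair, I would relax to any mod-$9$-compatible seed $(u,w)$ for which a reversal family can be found.

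The hard part will be square-freeness. Granting an explicit family $n=n(m)$, one must show that $n(m)$ and $r_{10}(n(m))$ are both square-free for infinitely many $m$. If $n(m)$ grew only polynomially in $m$ this would be within reach of the square-free sieve (unconditionally in low degree, and via the $abc$ conjecture in general); but reversal families of the above type grow exponentially in the number of inserted digits, and proving square-freeness along an exponentially growing sequence is beyond current unconditional technology, being of the same flavour as the open problem of infinitely many square-free Mersenne numbers. I therefore expect the realistic route to be a density argument: build a family carrying a true free parameter and show that a positive proportion of its members have both $n$ and $r_{10}(n)$ square-free, using that the square-free integers have density $6/\pi^2$ and that $n(m)$ and $r_{10}(n(m))$ are jointly equidistributed modulo $p^2$ for small primes $p$. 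Controlling the contribution of the large square divisors (those arising from primes $p$ comparable to $\sqrt{n}$) in the sieve is precisely the obstruction that, at present, keeps the statement a conjecture rather than a theorem.
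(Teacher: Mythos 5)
The statement you are addressing is Conjecture \ref{conj:2}, which the paper does not prove: it is restated from \cite[(b)]{tsai18} as an open conjecture, and nothing in the paper (or, to my knowledge, elsewhere) resolves it. So there is no paper proof to compare against, and your proposal, by its own admission, is not a proof either --- it is a program. Your preparatory reductions are sound: the additive split $v(ut)=v(u)+v(t)$, $v(wt)=v(w)+v(t)$ under $\gcd(t,uw)=1$ with a square-free seed pair $v(u)=v(w)$ is correct; the observation that the known infinite families ($18\rho$ with $\rho$ a $0$--$1$ palindrome, hence the families \eqref{eq:1998} and \eqref{eq:ten}) are useless here because $9\mid 18\rho$ is correct and worth making; and the mod $9$ obstruction ruling out the seed $(5,6)$ (since $r_{10}(n)\equiv n\pmod 9$ forces $9\mid t$, contradicting $\gcd(t,30)=1$) is a genuine constraint that a naive attempt would miss --- note it also shows why the paper's Section \ref{sec:main} construction, built entirely on $v(5)=v(6)$, cannot be recycled for this conjecture. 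The seed $(39,66)$ checks out: $v(39)=16=v(66)$, both square-free, $39\equiv 66\equiv 3\pmod 9$.

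The gaps, however, are exactly where you locate them, and they are fatal to this being a proof. First, the existence of even one infinite family of $t$ with $r_{10}(ut)=wt$, $\gcd(t,uw)=1$, is not established: the literature you invoke (Sutcliffe \cite{sutcliffe}, Klosinski--Smolarski \cite{klosinski}) treats $r_b(n)=kn$ with integer $k$, i.e.\ the seed $u=1$, and does not supply reversal families with the rational ratio $w/u=22/13$; your ``search for an insertion family'' is a hope, not a lemma. Second, even granting such a family, your own diagnosis is accurate: the members grow exponentially, so square-freeness of infinitely many of them (let alone of both $n$ and $r_{10}(n)$ simultaneously) is out of reach of the square-free sieve, and the proposed joint equidistribution modulo $p^2$ along an exponentially sparse sequence is itself an unproved (and likely hard) assertion. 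In short: your reduction correctly isolates why the conjecture is hard and contributes a nontrivial structural observation (the mod $9$ compatibility condition on seeds), but both pillars of the argument remain open, so the statement stays a conjecture --- consistent with the paper, which claims nothing more.
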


\begin{conjecture}[{\cite[(c)]{tsai18}}]\label{conj:3}
The only positive integer $n$ such that $n\neq r_{10}(n)$ and $n = v(r(n))$ is $49$.
\end{conjecture}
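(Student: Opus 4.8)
The plan is to fix $m := r_{10}(n)$ and read Conjecture \ref{conj:3} as the problem of finding all $n$ with $v(m) = n$ and $n \neq m$. Everything hinges on the elementary bound $v(k) \le k$, valid for all $k \ge 2$ with equality exactly when $k$ is prime or $k = 4$; this follows from the additivity of $v$ together with $v(p) = p$ and $v(p^e) = p + e \le p^e$ for $e \ge 2$. First I would normalize the problem with it. If $n$ ended in the digit $0$ then $r_{10}(n) < n$, whence $n = v(r_{10}(n)) \le r_{10}(n) < n$, impossible; so $n$ has no trailing zero, $r_{10}$ is an involution on the pair $n, m$, and both have the same number of digits. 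The bound also gives $n = v(m) \le m$, and since $n \neq m$ we get $n < m = r_{10}(n)$ with equality excluded, so $m$ is composite and $m \neq 4$.

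Second, I would sharpen the estimate to the following: for composite $m$ one has $v(m) \le m/2 + 2$, with equality iff $m = 2p$ ($p$ prime) or $m = 4$, the decisive inequality for two odd prime factors being $(p-2)(q-2) \ge 0$, while $v(m)$ is far smaller once $m$ has three or more prime factors or a large prime power (there $v(m) \le m/3 + 3$). Translating, $n = v(m)$ forces $r_{10}(n) = m \ge 2n - 4$. This confines the search, but I should stress at the outset that it does not by itself bound the digit-length: a reversal can be nearly ten times the original, so pure size estimates never terminate the argument.

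Third comes the structural core: organize by the factorization shape of $m$. The cleanest and extremal case is $m = 2p$, where $n = v(m) = p + 2$, so $p = n - 2$ and $m = 2n - 4$; combined with $m = r_{10}(n)$ this becomes the digit-reversal equation $r_{10}(n) = 2n - 4$. A short carry analysis shows its only solutions are $n = 5\cdot 10^k - 1 = 4\underbrace{9\cdots9}_{k}$ (namely $49, 499, 4999, \dots$). For such $n$ we have $r_{10}(n) = 10^{k+1} - 6 = 2\left(5\cdot 10^k - 3\right)$, so $v(r_{10}(n)) = 2 + v\left(5\cdot 10^k - 3\right) = n$ precisely when $5\cdot 10^k - 3$ is prime. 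For $k = 1$ this number is $47$, giving the known solution $n = 49$; the conjecture therefore asserts that $5\cdot 10^k - 3$ is composite for every $k \ge 2$. The remaining two-factor shapes $m = ap$ (with $a$ the smallest prime) reduce identically to a linear reversal equation $r_{10}(n) = an - a^2$ together with the side condition that $n - a$ be prime, while shapes with more prime factors or higher prime powers give $r_{10}(n) \ge cn - C$ with $c \ge 3$, which one would hope to dispatch by the same carry analysis followed by a more tightly constrained primality check.

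The main obstacle is exactly this last reduction. Even in the single cleanest family the conjecture is equivalent to the purely multiplicative assertion that $5\cdot 10^k - 3$ is never prime for $k \ge 2$, a statement with no evident covering congruence or algebraic factorization, since the observed factors $7, 19, 17, \dots$ vary erratically; it sits squarely in the same inaccessible territory as the non-existence of prime $v$-palindromes in Conjecture \ref{conj:1}, where $5\cdot 10^m - 3$ is the very quantity that appears. Thus no elementary argument seems to force compositeness, and on top of that one must simultaneously rule out infinitely many other factorization shapes of $r_{10}(n)$, for which the size inequalities fail to bound the number of digits and each shape leaves its own primality-type residue. It is this entanglement of digit-reversal rigidity with an unresolved primality question that keeps the statement a conjecture rather than a theorem.
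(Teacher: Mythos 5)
You were asked to prove a statement that the paper itself does not prove: Conjecture \ref{conj:3} is stated as an open conjecture, restated from \cite[(c)]{tsai18}, and the paper offers no argument for it beyond noting that the related characterization in \cite{polymath} neither proves nor disproves the companion Conjecture \ref{conj:1}. So there is no paper proof to compare against, and your proposal --- as you candidly say in your last paragraph --- is not a proof either but a reduction. The genuine gap is therefore the conjecture itself: your argument terminates in unproved compositeness assertions (most visibly, that $5\cdot 10^k-3$ is composite for every $k\ge 2$), and no amount of carry analysis closes that. That said, the partial reasoning is sound. The bound $v(k)\le k$ with equality exactly for primes and $4$, the exclusion of trailing zeros, the bound $v(m)\le m/2+2$ for composite $m$ with equality exactly for $m=2p$ and $m=4$, and the analysis of the extremal family are all correct: the solutions of $r_{10}(n)=2n-4$ are indeed $n=5\cdot 10^k-1$ (this checks out for small digit lengths, and the full carry argument is routine), $r_{10}(n)=2(5\cdot 10^k-3)$, and $n=v(r_{10}(n))$ holds exactly when $5\cdot 10^k-3$ is prime, with $k=1$ giving $47$ and hence the solution $49$. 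This is precisely the mechanism behind the \cite{polymath} characterization cited in the paper (a prime $n$ satisfying $v(n)=v(r_{10}(n))$ satisfies your equation $n=v(r_{10}(n))$), and your reduction in fact shows that Conjecture \ref{conj:3} implies Conjecture \ref{conj:1}, since a prime $v$-palindrome $5\cdot 10^m-1\neq 49$ would be a second solution --- a connection worth stating explicitly.

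One point where you overstate the difficulty: you claim one must rule out ``infinitely many other factorization shapes'' and that size estimates never tame the problem. The trivial bound $r_{10}(n)<10n$ does tame the shapes, even though it does not bound digit lengths within a family. Write $m=r_{10}(n)=d\,p^e$ with $p$ the largest prime factor and $p\nmid d$. If all prime factors of $m$ are below a fixed threshold, then $v(m)=O(\log m)$, so $n=v(m)\le C+\log_2(10n)$ bounds $n$; if $e\ge 2$ with $p$ large, then $p=n-v(d)-e$ makes $m\ge p^2$ exceed $10n$ for large $n$; and if $e=1$, then $m=d\,(n-v(d))<10n$ forces $d\le 10$ for large $n$, with $d=10$ excluded because $r_{10}(n)$ cannot end in $0$. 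So, apart from a finite computable check, the conjecture reduces to the finitely many linear reversal equations $r_{10}(n)=d\,(n-v(d))$ for $2\le d\le 9$, each with the side condition that $n-v(d)$ be prime. This sharpening does not rescue the proof --- each residual family still ends in an intractable primality question of the same flavor as $5\cdot 10^k-3$ --- but it shows your reduction terminates in finitely many families, which is the correct structural picture and would be the honest endpoint of any elementary attack on this conjecture.
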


As mentioned in Subsection \ref{sub:defn}, the prime $v$-palindromes in base ten are characterized in \cite{polymath}. This result, however, does not prove nor disprove Conjecture \ref{conj:1}. Also noted in Subsection \ref{sub:defn} is that there are prime $v$-palindromes in bases $16$ and $276$. Hence we may consider the following problem.

\begin{problem}
Let $b\geq 2$ be an integer.  When does there exist a prime $v$-palindrome in base $b$?
\end{problem}

We may also consider Conjectures \ref{conj:2} and \ref{conj:3} for a general base.

\subsection{Two problems}\label{sub:palin}
While \cite{pong1} provides an exact formula for the number of palindromes up to a given positive integer, the same can be considered for $v$-palindromes, namely the following.
\begin{problem}
Let $b\geq2$ be an integer. Is there a formula for the number of $v$-palindromes in base $b$ up to a given positive integer? If not, how can it be approximated?
\end{problem}
From $199$ until $575$ are $377$ consecutive positive integers which are not $v$-palindromes in base ten. Just as sequences of consecutive composite numbers can be arbitrarily long, we may consider the following problem.
\begin{problem}
    Let $b\geq2$ be an integer. Can a sequence of consecutive positive integers each not a $v$-palindrome in base $b$ be arbitrarily long?
\end{problem}

\subsection{Existence of $v$-palindromes in an arbitrary base}\label{sub:forall}

Section \ref{sec:vpalb} $\\$ showed that $v$-palindromes exist in bases $p+1$ and $p^2+1$ for any odd prime $p$. Section \ref{sec:main} showed that $v$-palindromes exist in all bases $b\equiv \modd{120} {330}$. However we are still left with the problem of determining, for an arbitrary integer $b\geq2$, whether a $v$-palindrome in base $b$ exists.

The proof in the previous section is based on the equality $v(5)=v(6)$. It is conceivable that the same method basing on other common values of $v$ will find other bases $b$ for which a $v$-palindrome exists. For instance, we have
\begin{gather}
v(5)=v(6)=v(8)=v(9),\\
v(7)=v(10)=v(12)=v(18). 
\end{gather}

We give the following table of the smallest $v$-palindrome, i.e., $\min(\mathbb{V}_b)$, for the first few bases, calculated using PARI/GP \cite{pari}.
\begin{table}[H]
 \caption{The smallest $v$-palindrome for bases $b\leq 19$.}
 \label{table:indicatorfun}
 \centering
  \begin{tabular}{lll}
   \hline
   $b$ & $\text{$\min(\mathbb{V}_b)$ written in base $10$}$ & $\text{$\min(\mathbb{V}_b)$ written in base $b$}$ \\
   \hline \hline
   $2$ & $175$ & $1,0,1,0,1,1,1,1$ \\
   $3$ & $1280$ & $1,2,0,2,1,0,2$ \\
   $4$ & $6$ & $1,2$ \\
   $5$ & $288$ & $2,1,2,3$ \\
   $6$ & $10$ & $1,4$ \\
   $7$ & $731$ &  $2,0,6,3$\\
   $8$ & $14$ & $1,6$ \\
   $9$ & $93$ & $1,1,3$ \\
   $10$ & $18$ & $1,8$ \\
   $11$ & $135$ & $1,1,3$ \\

$12$ & $22$ & $1,10$\\
$13$ & $63$ & $4,11$\\

$14$ & $26$ & $1,12$\\
 $15$ & $291$ & $1,4,6$\\
 $16$ & $109$ & $6,13$\\
 $17$ & $581$ & $2,0,3$\\
 $18$ & $34$ & $1,16$\\
 $19$ & $144$ & $7,11$ \\
   \hline
  \end{tabular}
\end{table}

\bibliography{refs}{}
\bibliographystyle{plain}

\end{singlespace}
\end{document}